\documentclass[12pt,reqno]{amsart} 
\usepackage{amssymb,amscd,url}
\usepackage{xcolor}
\usepackage{unicode-math}
\usepackage{mathrsfs}
\usepackage{amsfonts}
\usepackage{dsfont}
\usepackage[margin=1.2in]{geometry}
\usepackage{tikz} 
\usetikzlibrary{arrows.meta, positioning}
\usepackage{colonequals}
\usepackage{tikz-cd}
\usepackage{minted}

\begin{document}

\allowdisplaybreaks


\title[polynomials with rational periodic points]
      {Density of Unicritical Polynomials over Local Fields with Rational Periodic Points}
\date{\today}
\author[A. M. Dietrich]{Anna M. Dietrich}
\address{Mathematics Department, Box 1917
         Brown University, Providence, RI 02912 USA}
\email{anna\_dietrich@brown.edu}


\subjclass[2020]{Primary: 37P45, 37P05; Secondary: 11R45, 11G20}



\hyphenation{ca-non-i-cal semi-abel-ian}


\newtheorem{theorem}{Theorem}[section]
\newtheorem{lemma}[theorem]{Lemma}
\newtheorem{sublemma}[theorem]{Sublemma}
\newtheorem{conjecture}[theorem]{Conjecture}
\newtheorem{proposition}[theorem]{Proposition}
\newtheorem{corollary}[theorem]{Corollary}
\newtheorem*{claim}{Claim}

\theoremstyle{definition}
\newtheorem*{definition}{Definition}
\newtheorem{example}[theorem]{Example}
\newtheorem{remark}[theorem]{Remark}
\newtheorem{question}[theorem]{Question}

\theoremstyle{remark}
\newtheorem*{acknowledgement}{Acknowledgements}

\numberwithin{equation}{section}


\newenvironment{notation}[0]{%
  \begin{list}%
    {}%
    {\setlength{\itemindent}{0pt}
     \setlength{\labelwidth}{4\parindent}
     \setlength{\labelsep}{\parindent}
     \setlength{\leftmargin}{5\parindent}
     \setlength{\itemsep}{0pt}
     }%
   }%
  {\end{list}}

\newenvironment{parts}[0]{%
  \begin{list}{}%
    {\setlength{\itemindent}{0pt}
     \setlength{\labelwidth}{1.5\parindent}
     \setlength{\labelsep}{.5\parindent}
     \setlength{\leftmargin}{2\parindent}
     \setlength{\itemsep}{0pt}
     }%
   }%
  {\end{list}}
\newcommand{\Part}[1]{\item[\upshape#1]}

\def\Case#1#2{%
\paragraph{\textbf{\boldmath Case #1: #2.}}\hfil\break\ignorespaces}

\renewcommand{\a}{\alpha}
\renewcommand{\b}{\beta}
\newcommand{\g}{\gamma}
\renewcommand{\d}{\delta}
\newcommand{\e}{\epsilon}
\newcommand{\f}{\varphi}
\newcommand{\bfphi}{{\boldsymbol{\f}}}
\renewcommand{\l}{\lambda}
\renewcommand{\k}{\kappa}
\newcommand{\lhat}{\hat\lambda}
\newcommand{\m}{\mu}
\newcommand{\bfmu}{{\boldsymbol{\mu}}}
\renewcommand{\o}{\omega}
\renewcommand{\r}{\rho}
\newcommand{\rbar}{{\bar\rho}}
\newcommand{\s}{\sigma}
\newcommand{\sbar}{{\bar\sigma}}
\renewcommand{\t}{\tau}
\newcommand{\z}{\zeta}

\newcommand{\D}{\Delta}
\newcommand{\G}{\Gamma}
\newcommand{\F}{\Phi}
\renewcommand{\L}{\Lambda}

\newcommand{\ga}{{\mathfrak{a}}}
\newcommand{\gb}{{\mathfrak{b}}}
\newcommand{\gn}{{\mathfrak{n}}}
\newcommand{\gp}{{\mathfrak{p}}}
\newcommand{\gP}{{\mathfrak{P}}}
\newcommand{\gq}{{\mathfrak{q}}}

\newcommand{\Abar}{{\bar A}}
\newcommand{\Ebar}{{\bar E}}
\newcommand{\kbar}{{\bar k}}
\newcommand{\Kbar}{{\bar K}}
\newcommand{\Pbar}{{\bar P}}
\newcommand{\Sbar}{{\bar S}}
\newcommand{\Tbar}{{\bar T}}

\newcommand{\Acal}{{\mathcal A}}
\newcommand{\Bcal}{{\mathcal B}}
\newcommand{\Ccal}{{\mathcal C}}
\newcommand{\Dcal}{{\mathcal D}}
\newcommand{\Ecal}{{\mathcal E}}
\newcommand{\Fcal}{{\mathcal F}}
\newcommand{\Gcal}{{\mathcal G}}
\newcommand{\Hcal}{{\mathcal H}}
\newcommand{\Ical}{{\mathcal I}}
\newcommand{\Jcal}{{\mathcal J}}
\newcommand{\Kcal}{{\mathcal K}}
\newcommand{\Lcal}{{\mathcal L}}
\newcommand{\Mcal}{{\mathcal M}}
\newcommand{\Ncal}{{\mathcal N}}
\newcommand{\Ocal}{{\mathcal O}}
\newcommand{\Pcal}{{\mathcal P}}
\newcommand{\Qcal}{{\mathcal Q}}
\newcommand{\Rcal}{{\mathcal R}}
\newcommand{\Scal}{{\mathcal S}}
\newcommand{\Tcal}{{\mathcal T}}
\newcommand{\Ucal}{{\mathcal U}}
\newcommand{\Vcal}{{\mathcal V}}
\newcommand{\Wcal}{{\mathcal W}}
\newcommand{\Xcal}{{\mathcal X}}
\newcommand{\Ycal}{{\mathcal Y}}
\newcommand{\Zcal}{{\mathcal Z}}

\renewcommand{\AA}{\mathbb{A}}
\newcommand{\BB}{\mathbb{B}}
\newcommand{\CC}{\mathbb{C}}
\newcommand{\FF}{\mathbb{F}}
\newcommand{\GG}{\mathbb{G}}
\newcommand{\NN}{\mathbb{N}}
\newcommand{\PP}{\mathbb{P}}
\newcommand{\QQ}{\mathbb{Q}}
\newcommand{\RR}{\mathbb{R}}
\newcommand{\ZZ}{\mathbb{Z}}

\newcommand{\bfa}{{\boldsymbol a}}
\newcommand{\bfb}{{\boldsymbol b}}
\newcommand{\bfc}{{\boldsymbol c}}
\newcommand{\bfd}{{\boldsymbol d}}
\newcommand{\bfe}{{\boldsymbol e}}
\newcommand{\bff}{{\boldsymbol f}}
\newcommand{\bfg}{{\boldsymbol g}}
\newcommand{\bfi}{{\boldsymbol i}}
\newcommand{\bfj}{{\boldsymbol j}}
\newcommand{\bfp}{{\boldsymbol p}}
\newcommand{\bfr}{{\boldsymbol r}}
\newcommand{\bfs}{{\boldsymbol s}}
\newcommand{\bft}{{\boldsymbol t}}
\newcommand{\bfu}{{\boldsymbol u}}
\newcommand{\bfv}{{\boldsymbol v}}
\newcommand{\bfw}{{\boldsymbol w}}
\newcommand{\bfx}{{\boldsymbol x}}
\newcommand{\bfy}{{\boldsymbol y}}
\newcommand{\bfz}{{\boldsymbol z}}
\newcommand{\bfA}{{\boldsymbol A}}
\newcommand{\bfF}{{\boldsymbol F}}
\newcommand{\bfB}{{\boldsymbol B}}
\newcommand{\bfD}{{\boldsymbol D}}
\newcommand{\bfG}{{\boldsymbol G}}
\newcommand{\bfI}{{\boldsymbol I}}
\newcommand{\bfM}{{\boldsymbol M}}
\newcommand{\bfP}{{\boldsymbol P}}
\newcommand{\bfzero}{{\boldsymbol{0}}}
\newcommand{\bfone}{{\boldsymbol{1}}}

\newcommand{\Aut}{\operatorname{Aut}}
\newcommand{\codim}{\operatorname{codim}}
\newcommand{\Crit}{\operatorname{Crit}}
\newcommand{\Disc}{\operatorname{Disc}}
\newcommand{\Div}{\operatorname{Div}}
\newcommand{\Dom}{\operatorname{Dom}}
\newcommand{\End}{\operatorname{End}}
\newcommand{\Fbar}{{\bar{F}}}
\newcommand{\Fix}{\operatorname{Fix}}
\newcommand{\Gal}{\operatorname{Gal}}
\newcommand{\GL}{\operatorname{GL}}
\newcommand{\Hom}{\operatorname{Hom}}
\newcommand{\Image}{\operatorname{Image}}
\newcommand{\Isom}{\operatorname{Isom}}
\newcommand{\hhat}{{\hat h}}
\newcommand{\Ker}{{\operatorname{ker}}}
\newcommand{\limstar}{\lim\nolimits^*}
\newcommand{\limstarn}{\lim_{\hidewidth n\to\infty\hidewidth}{\!}^*{\,}}
\newcommand{\Mat}{\operatorname{Mat}}
\newcommand{\maxplus}{\operatornamewithlimits{\textup{max}^{\scriptscriptstyle+}}}
\newcommand{\MOD}[1]{~(\textup{mod}~#1)}
\newcommand{\Mor}{\operatorname{Mor}}
\newcommand{\Moduli}{\mathcal{M}}
\newcommand{\Norm}{{\operatorname{\mathsf{N}}}}
\newcommand{\notdivide}{\nmid}
\newcommand{\normalsubgroup}{\triangleleft}
\newcommand{\NS}{\operatorname{NS}}
\newcommand{\onto}{\twoheadrightarrow}
\newcommand{\ord}{\operatorname{ord}}
\newcommand{\Orbit}{\mathcal{O}}
\newcommand{\Per}{\operatorname{Per}}
\newcommand{\PrePer}{\operatorname{PrePer}}
\newcommand{\PGL}{\operatorname{PGL}}
\newcommand{\Pic}{\operatorname{Pic}}
\newcommand{\Prob}{\operatorname{Prob}}
\newcommand{\Proj}{\operatorname{Proj}}
\newcommand{\Qbar}{{\bar{\QQ}}}
\newcommand{\rank}{\operatorname{rank}}
\newcommand{\Rat}{\operatorname{Rat}}
\newcommand{\Resultant}{\operatorname{Res}}
\renewcommand{\setminus}{\smallsetminus}
\newcommand{\sgn}{\operatorname{sgn}} 
\newcommand{\SL}{\operatorname{SL}}
\newcommand{\Span}{\operatorname{Span}}
\newcommand{\Spec}{\operatorname{Spec}}
\renewcommand{\ss}{\textup{ss}}
\newcommand{\stab}{\textup{stab}}
\newcommand{\Support}{\operatorname{Supp}}
\newcommand{\tors}{{\textup{tors}}}
\newcommand{\tr}{{\textup{tr}}} 
\newcommand{\Trace}{\operatorname{Trace}}
\newcommand{\trianglebin}{\mathbin{\triangle}} 
\newcommand{\UHP}{{\mathfrak{h}}}    
\newcommand{\<}{\langle}
\renewcommand{\>}{\rangle}

\newcommand{\pmodintext}[1]{~\textup{(mod}~#1\textup{)}}
\newcommand{\ds}{\displaystyle}
\newcommand{\longhookrightarrow}{\lhook\joinrel\longrightarrow}
\newcommand{\longonto}{\relbar\joinrel\twoheadrightarrow}
\newcommand{\SmallMatrix}[1]{%
  \left(\begin{smallmatrix} #1 \end{smallmatrix}\right)}


\begin{abstract}
We prove an asymptotic formula for the density of parameters $c\in \ZZ_p$ for which the polynomial $x^d+c$ admits a $\QQ_p$-rational $n$-cycle. In the cases where $d=2$ and $p\neq 2,3$, we compute these densities explicitly for $n=1,2,$ and $3$.    
\end{abstract}

\maketitle


\section{Introduction}
\label{section:introduction}

The central aim of arithmetic statistics, broadly speaking, is to count number-theoretic objects with certain prescribed properties. Popular work in the area has focused on topics such as number fields of fixed degree and discriminant \cite{Bhargava2010,Davenport-Heilbronn71,Ellenberg-Venkatesh2006,LemkeOliver-Thorne2022},
random polynomials whose roots lie in a given field \cite{BCFG2022, Evans2006, Kac43}, elliptic curves with specified torsion subgroups or isogenies \cite{Boggess-Sankar2024,Harron-Snowden2017, Molnar-Voight2023}, and more recently, preperiodic points in families of rational maps \cite{Olechnowicz26}.

In \cite{GRV2025}, Gajović, Radičević, and Verzobio calculate rational functions in $p$ that describe the probability that a random Weierstrass equation corresponds to an elliptic curve with a $\QQ_p$-rational $3$-torsion point. The goal of this paper is to do the same for unicritical polynomials with a point of period $n$. Formally, we study the density of the sets 
\[\mathcal{A}_{p,d,n}\colonequals \{c\in \ZZ_p:x^d + c \text{ admits a } \QQ_p\text{-rational point of formal period } n\}.\]
We focus on unicritical polynomials because they are the simplest nontrivial example of one-parameter families of maps. Throughout, we normalize the natural Haar measure on $\ZZ_p$ so that $\mu(\ZZ_p)=1$, allowing us to interpret densities as probabilities.    
\begin{theorem}\label{theorem:genresult}
    Fix $d\geq 2$. Let 
    \[\Delta_n\colonequals \frac{1}{n} \sum_{k|n}\mathcal{M}\left(\frac{n}{k}\right)d^k,\]
    where $\mathcal{M}$ is the M\"{o}bius function\footnote{This is unconventional notation for the M\"{o}bius function, which is usually denoted $\mu$. However, the usual notation would be confusing in this context. Also, note that $n\Delta_n$ is the degree of the $n$th dynatomic polynomial.}. Then, 
    \[\lim_{p\to \infty}\mu_{\ZZ_p}(\mathcal{A}_{p,d,n}) = 1-\sum_{k=0}^{\D_n}\sum_{i=0}^{\D_n - k} \left(\frac{(-1)^i}{i!k!}\right)\left(\frac{n-1}{n}\right)^k.\]
\end{theorem}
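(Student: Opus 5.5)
The plan is to reduce the $p$-adic density to a point count over $\FF_p$ via Hensel's lemma. That count is then governed by the Chebotarev density theorem for the function field $\FF_p(c)$ applied to the $n$th dynatomic polynomial $\Phi_n(x,c)$ of $x^d+c$. Finally, the resulting proportion of "good" Frobenius elements in the monodromy group is evaluated by a derangement-type computation, which produces the double sum in the statement.

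\emph{Step 1 (Hensel reduction).} For $c\in\ZZ_p$, any $\QQ_p$-rational root $x$ of $\Phi_n(x,c)$ satisfies $|x|_p\le 1$, since $x^d+c$ has good reduction. So $x$ reduces to a root of $\overline{\Phi}_n(x,\bar c)$ in $\FF_p$. Conversely, a simple root mod $p$ lifts to a root in $\ZZ_p$. Let $D(c)=\Disc_x\Phi_n(x,c)$, together with the resultant detecting points of formal period $n$ that are not of exact period $n$. This is a nonzero polynomial in $c$ of degree bounded independently of $p$. For $p$ large it stays nonzero mod $p$, so it has $O(1)$ roots in $\FF_p$. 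Off these residue classes, $c\in\mathcal{A}_{p,d,n}$ if and only if $\Phi_n(x,\bar c)$ has a root in $\FF_p$, and the exceptional classes have total measure $O(1/p)$. Hence
\[\mu_{\ZZ_p}(\mathcal{A}_{p,d,n})=\frac{\#\{\bar c\in\FF_p:\Phi_n(x,\bar c)\text{ has a root in }\FF_p\}}{p}+O(1/p).\]

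\emph{Step 2 (Monodromy).} Over $\CC(c)$, the roots of $\Phi_n$ are the $n\Delta_n$ points of exact period $n$, grouped into $\Delta_n$ cycles, so the Galois group embeds in $C_n\wr S_{\Delta_n}$. By the results of Bousch for $d=2$, and Morton and Lau--Schleicher for general unicritical families, the geometric monodromy group is the full wreath product. I will transfer this to characteristic $p$ for $p\gg 0$. The cover $\{\Phi_n=0\}\to\AA^1_c$ extends over $\ZZ[1/N]$ with branch locus of bounded degree. For $p$ not dividing $N$ or the ramification indices (which are bounded in terms of $n,d$), Grothendieck's specialization of the tame fundamental group shows that the geometric monodromy over $\overline{\FF}_p(c)$ is still $C_n\wr S_{\Delta_n}$. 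Since the geometric group is already the whole wreath product, the arithmetic group over $\FF_p(c)$ coincides with it, and there is no constant field extension.

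I expect this step to be the main obstacle, specifically checking carefully that tameness and good reduction of the branch data hold for all large $p$. If that is delicate, I would instead argue via Hilbert irreducibility/specialization of the Galois group of $\Phi_n$ over $\QQ(c)$ modulo almost all primes.

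\emph{Step 3 (Chebotarev and counting).} Effective Chebotarev for function fields (via Lang--Weil, with error $O_{n,d}(\sqrt p)$) shows that the proportion of $\bar c\in\FF_p$ with an $\FF_p$-root tends to the proportion of elements of $G=C_n\wr S_{\Delta_n}$ fixing some root. Write $g=(a_1,\dots,a_{\Delta_n};\sigma)$. Then $g$ fixes a root in block $j$ exactly when $\sigma(j)=j$ and $a_j=0$. Conditioned on $\sigma$ having exactly $k$ fixed points, the probability that $g$ fixes no root is $\bigl(\tfrac{n-1}{n}\bigr)^k$, since the $a_j$ are independent and uniform in $C_n$. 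The probability that a uniform $\sigma\in S_{\Delta_n}$ has exactly $k$ fixed points is $\frac1{k!}\sum_{i=0}^{\Delta_n-k}\frac{(-1)^i}{i!}$. Summing over $k$ and taking complements gives the stated limit, with the $O(1/p)$ and $O(p^{-1/2})$ errors vanishing as $p\to\infty$.
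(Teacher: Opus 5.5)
Your argument is correct, and it takes a genuinely different route from the paper's.

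\textbf{The paper's route.} The paper starts from the Galois group $C_n\wr S_{\Delta_n}$ of $\Phi_{n,d}$ over $\QQ(c)$. It uses Hilbert irreducibility to keep this group for specializations $c\mapsto c_0$ outside a thin set. It then applies the number-field Chebotarev theorem over the primes $p$, and finishes with the same wreath-product count as your Step 3.

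\textbf{Your route.} You fix $p$ and use Hensel's lemma to turn $\mu_{\ZZ_p}(\mathcal{A}_{p,d,n})$ into a count of $\bar c\in\FF_p$, up to $O(1/p)$. You carry the \emph{geometric} monodromy from $\overline{\QQ}(c)$ to $\overline{\FF}_p(c)$ by tame good reduction of the cover. You then apply an effective Chebotarev theorem to the degree-one places of $\FF_p(c)$, with error uniform in $p$.

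\textbf{What each buys.} The paper's route is shorter and needs only the arithmetic Galois group over $\QQ(c)$. But what it literally yields is different: for a fixed good $c_0$, the density of primes $p$ for which $\Phi_{n,d}(c_0,x)$ has a root mod $p$. Getting from that to the Haar measure in $c$ at fixed $p$, as $p\to\infty$, needs the equidistribution of Frobenius over $\bar c\in\FF_p$. Your Steps 1--3 supply exactly that. Your route costs more input:
\begin{itemize}
\item full geometric monodromy, so that $\FF_p(c)$ gets no constant-field extension (such an extension would confine the Frobenius classes at degree-one places to a single coset);
\item good reduction of the cover for $p\gg 0$;
\item a Weil-bound Chebotarev estimate.
\end{itemize}
In return it proves the limit as stated and gives the rate $O_{n,d}(p^{-1/2})$.

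\textbf{Your fallback.} Be wary of the Hilbert-irreducibility fallback. Hilbert irreducibility specializes $c$; it does not reduce modulo $p$. Also, knowing only the arithmetic group over $\QQ(c)$ would not rule out a constant-field extension over $\FF_p(c)$. So keep the geometric-monodromy argument as the main line.
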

As a corollary to Theorem \ref{theorem:genresult}, for large enough $p$, we see that the measure of $\mathcal{A}_{p,d,n}$ goes to $0$ as $n$ approaches infinity. 

We are able to prove more specific densities for all $p>3$ in the case of quadratic polynomials with rational cycles of sizes $n= 1,2$, and $3$. For periods $1$ and $2$, the densities have particularly simple closed forms.
\begin{theorem}\label{theorem:fixedand2}
    Fix $p\geq 2$. Then,
    \[\mu_{\ZZ_p}(\mathcal{A}_{p,2,1}) = \mu_{\ZZ_p}(\mathcal{A}_{p,2,2}) = \begin{cases}
        \frac{1}{2}, &\text{ if } p = 2,\\[5pt]
        \frac{p}{2(p+1)}, & \text{ if } p \geq 3.
    \end{cases}\]
\end{theorem}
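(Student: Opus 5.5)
The plan is to reduce both densities to the density of squares in a suitable subset of $\ZZ_p$ by writing the relevant dynatomic polynomials explicitly. For $f_c(x)=x^2+c$ the first dynatomic polynomial is $\Phi_1(x)=x^2-x+c$, with discriminant $1-4c$. The second is
\[\Phi_2(x)=\frac{f_c^2(x)-x}{f_c(x)-x}=x^2+x+c+1,\]
with discriminant $-3-4c$. A quadratic over $\QQ_p$ (with $p$ arbitrary, including $p=2$, since the leading coefficient is $1$) has a $\QQ_p$-rational root iff its discriminant is a square in $\QQ_p$. Since the discriminant lies in $\ZZ_p$, this is the same as being a square in $\ZZ_p$. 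Hence
\[\mathcal{A}_{p,2,1}=\{c: 1-4c\in \ZZ_p^2\},\qquad \mathcal{A}_{p,2,2}=\{c: -3-4c\in\ZZ_p^2\}.\]
The degenerate parameters $c=1/4$ and $c=-3/4$, where the discriminant vanishes, are single points and have measure zero. So no care is needed about whether a double root counts as having formal period $n$.

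For $p\ge 3$, the element $4$ is a unit, so the affine maps $c\mapsto 1-4c$ and $c\mapsto -3-4c$ are bijections of $\ZZ_p$ preserving Haar measure. Both densities therefore equal $\mu(\ZZ_p\cap \QQ_p^{\times2})$. A nonzero $y\in\ZZ_p$ is a square iff $y=p^{2k}u$ with $u\in\ZZ_p^\times$ a square. By Hensel's lemma ($p$ odd), $u$ is a square iff its reduction is a nonzero square mod $p$, so half of the units are squares. The set $p^{2k}\ZZ_p^\times$ has measure $p^{-2k}(1-1/p)$. Summing gives
\[\sum_{k\ge0}\tfrac12\left(1-\tfrac1p\right)p^{-2k}=\frac{\tfrac12(1-\tfrac1p)}{1-p^{-2}}=\frac{p}{2(p+1)}.\]
This also covers $p=3$: the discriminant $-3-4c$ can be divisible by $3$, but that is already handled by the general square count after the affine change of variables.

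For $p=2$ I would argue directly. Both $1-4c$ and $-3-4c=1-4(c+1)$ are $\equiv 1\pmod 4$, hence odd. An odd $2$-adic integer is a square iff it is $\equiv1\pmod 8$. Now $1-4c\equiv1\pmod 8$ iff $c$ is even, a set of measure $1/2$. Similarly $-3-4c\equiv 1\pmod 8$ iff $c+1$ is even, i.e.\ $c$ is odd, again a set of measure $1/2$.

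The only step requiring any care is this $p=2$ case, where Hensel's lemma needs the mod-$8$ criterion. Even there the computation is routine, so I expect no real obstacle. The main point is just to verify the formula for $\Phi_2$ and to note the measure-zero degenerate cases.
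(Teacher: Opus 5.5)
Your proof is correct. For $p=2$ it matches the paper's argument. The paper writes out the mod-$8$ computation only for $1-4c$ and then asserts the period-$2$ case by analogy, even though its map $t\mapsto 4^{-1}(-3-t^2)$ is not defined over $\ZZ_2$. Your check that $-3-4c\equiv 1\pmod 8$ exactly when $c$ is odd supplies that missing case.

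One small correction: the discriminant criterion holds over $\QQ_2$ because $\QQ_2$ has characteristic $0$, not because the polynomial is monic.

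For $p\ge 3$ your route differs from the paper's. You use the measure-preserving affine bijections $c\mapsto 1-4c$ and $c\mapsto -3-4c$ of $\ZZ_p$ to reduce both densities to the Haar measure of the squares in $\ZZ_p$. You then compute that measure by stratifying by valuation and applying Hensel's lemma to units. The paper instead describes the good parameters as the image of $t\mapsto 4^{-1}(1-t^2)$. It pushes Haar measure forward through this generically $2$-to-$1$ map using Igusa's change-of-variables formula (Lemma~\ref{lem:igusa}), which gives $\frac12\int_{\ZZ_p}|t|_p\,d\mu=\frac{p}{2(p+1)}$.

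Your argument is more elementary and self-contained. It also makes plain that the answer is just the density of squares in $\ZZ_p$, and it does not need the (implicit) global bookkeeping that the map is exactly $2$-to-$1$ off $t=0$. The paper's argument is framed around parametrizing the genus-$0$ dynatomic curve, the viewpoint it wants to carry to period $3$. There, however, the parametrization is only a rational map, and the paper switches to a different method.
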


The case of period $3$ is more subtle. In Section \ref{section:quadresult}, we introduce two rational functions, $\f$ and $\psi$, whose composition governs the density of quadratic polynomials with a $\QQ_p$-rational $3$-cycle. The resulting density depends on the image size of $\psi\circ \f$ modulo $p$, together with correction terms arising from its ramification. We note that this differs from the behavior observed in \cite{BCF2021,BCFG2022,FHP2021,GRV2025}, for example, where the density is described by a fixed rational function in $p$. The fact that this is not the case here may therefore be somewhat surprising. 

Since the precise statement of the theorem requires additional background, we include  Table \ref{table-deg2-per3}, which has the computed densities for several small primes, as a preliminary illustration. We refer the reader to Section \ref{section:period3points} and Theorem \ref{theorem:per3} for the relevant definitions and full statement. 

\begin{table}[h]
    \centering
    \begin{tabular}{@{\vrule height2.8ex depth1.5ex width0pt}|c|c|}
    \hline
         $p$& $\ZZ_p$ Density \\
         \hline
        $101$ & $\frac{9274443}{35030234} \approx 0.265$\\
        \hline
        $103$ & $\frac{7650441}{28410902} \approx 0.269$\\
        \hline
        $107$ & $\frac{6126187}{22050774} \approx 0.278$\\
        \hline
        $109$ & $\frac{40151949}{142453190} \approx 0.282$\\
        \hline
        $113$ & $\frac{23092850}{82245129} \approx 0.281$\\
        \hline
        $127$ & $\frac{71709789}{262193024} \approx 0.273$ \\
        \hline
    \end{tabular}
    \vspace{5pt}
    
    \caption{The $p$-adic density $\mu(\mathcal{A}_{p,d,n})$ of quadratic polynomials with a $\QQ_p$-rational $3$-periodic point.}
    \label{table-deg2-per3}
\end{table}
The paper is organized as follows. Section \ref{section:background} reviews the necessary background and tools used throughout. In Section \ref{section:generalresult}, we apply the Chebotarev Density Theorem to prove Theorem \ref{theorem:genresult}. Finally, in Section \ref{section:quadresult} we specialize to the family $x^2 + c$ and prove the results about periods $1,2$ and $3$.
\begin{acknowledgement}
The author thanks Joe Silverman for suggesting the problem, and for many helpful conversations and guidance. 
\end{acknowledgement} 
\section{Background}
\label{section:background}
Let $f$ be an automorphism of the projective line. We denote by $f^n$ the $n$th iterate of $f$, with the convention that $f^0 = \text{id}$ and $f^{n+1} = f \circ f^n$. A point $\alpha \in \PP^1$ has period $n$ if $f^n(\alpha) = \alpha$. In this paper, we study families of polynomials whose roots are the points of formal period $n$ for a map $f$. We state the definition in the unicritical case for clarity and notational purposes. 
\begin{definition}
    Let $f_{d,c}(x) = x^d + c$. The $n$th dynatomic polynomial is 
    \[\Phi_{n,d}(c,x) \colonequals \prod_{k|n} (f_{d,c}^k(x) - x)^{\mathcal{M}(n/k)},\]
    where $\mathcal{M}$ is the M\"{o}bius function\footnote{
    The fact that $\Phi_{n,d}(c,x)\in \ZZ[c,x]$ is explained in Chapter 4.2 of \cite{Silverman2007}, for example.}. 
\end{definition}

Some well-studied examples are the dynatomic polynomials for $x^2 + c$. The first few can be computed explicitly as 
\begin{align*}
\Phi_{1,2}(c,x) &= x^2 - x + c,\\
\Phi_{2,2}(c,x) &= x^2 + x + c + 1,\\
\Phi_{3,2}(c,x) &= x^6 + x^5 + (3c + 1)x^4 + (2c + 1)x^3 \\
&\quad + (3c^2 + 3c + 1)x^2 + (c^2 + 2c + 1)x  + (c^3 + 2c^2 + c + 1).
\end{align*}
We will use these explicit formulas extensively in Section \ref{section:quadresult}. The dynatomic modular curve $Y_1(n;d)$ is the affine curve cut out by \[\Phi_{n,d}(c,x)= 0.\] 
We denote the normalization of its projective closure by $X_1(n;d)$, and define $X_0(n;d)$ as the quotient of $X_1(n;d)$ by the subgroup of $\Aut(X_1(n;d))$ generated by $f_{d,c}(x)$. 

Irreducibility of $X_1(n;2)$ was shown by Bousch \cite{Bousch92}. This was later reproven and expanded to general $X_1(n;d)$ by Lau and Schleicher \cite{Lau-Schleicher94} and Morton \cite{Morton96}. For additional background on dynatomic modular curves, see Chapter $4$ of \cite{Silverman2007}. 

For fixed points and points of period $2$ of polynomials $x^2 + c$, we use $p$-adic integration to compute the relevant densities. With the appropriate setup, the theory of Igusa zeta functions significantly simplifies these calculations. In particular, we make use of the following change-of-variables formula for $p$-adic integrals due to Igusa \cite{Igusa2000} (Proposition 7.4.1). The theorem holds in greater generality, but we state only the case needed here.

\begin{lemma}\label{lem:igusa}
    Let $f=(f_1,\ldots,f_n):\ZZ_p^n \to \ZZ_p^n$, with all $f_i$ being $\ZZ_p$-analytic around a point $a\in \ZZ_p^n$. In addition, suppose that 
    \[\frac{\partial(f_1,\ldots,f_n)}{\partial(x_1,\ldots x_n)}(a) \neq 0.\]
    Then there are neighborhoods $U$ and $V$ around $a$ and $y = f(a)$ respectively such that $f:U\to V$ is a bijection, and 
     \[dy = \left|\frac{\partial(f_1,\ldots,f_n)}{\partial(x_1,\ldots x_n)}\right|_pdx.\]
\end{lemma}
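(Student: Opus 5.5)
The statement has two parts: a $p$-adic inverse function theorem, and a comparison of Haar measures. I would prove them in that order. The second part reduces to two facts: linear maps scale Haar measure by $|\det|_p$, and an analytic map with invertible derivative is exactly affine on sufficiently small balls.

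\textbf{Step 1 (local inverse).} Write $A = Df(a)\in \Mat_n(\ZZ_p)$, which is invertible over $\QQ_p$ since $\det A\neq 0$. Because the $f_i$ are given by convergent power series around $a$, after shrinking we have an estimate
\[ f(x+h) = f(x) + Df(x)h + R(x,h), \qquad \|R(x,h)\|\le C\|h\|^2 .\]
This holds on a ball $B(a,p^{-m_0})$ with a uniform constant $C$; one can rescale coordinates by a power of $p$ to arrange that the power series have integral coefficients. Set $g(x) = x - A^{-1}(f(x)-y)$. For $y$ close to $f(a)$ and $x$ in a small enough ball $U=B(a,p^{-m})$, $g$ is a contraction of $U$ into itself; this is Newton's method, or the multivariable Hensel lemma. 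Its unique fixed point gives a unique $x\in U$ with $f(x)=y$. This yields the neighborhoods $U$ and $V=f(U)$ and the bijection $f\colon U\to V$; continuity of the inverse follows from the contraction estimate.

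\textbf{Step 2 (linear case).} For $M\in \GL_n(\QQ_p)$, I would show $\mu(M(E)) = |\det M|_p\,\mu(E)$. I would use the Smith normal form $M = P D Q$, with $P,Q\in\GL_n(\ZZ_p)$ and $D=\mathrm{diag}(p^{k_1},\dots,p^{k_n})$, applied after clearing denominators. Elements of $\GL_n(\ZZ_p)$ permute the balls $x+p^m\ZZ_p^n$, so they preserve $\mu$ by uniqueness of Haar measure. The diagonal matrix $D$ visibly scales volume by $p^{-\sum k_i}=|\det M|_p$.

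\textbf{Step 3 (local affineness and gluing).} Shrinking $U$ further, the continuity of $x\mapsto \det Df(x)$ and the discreteness of $|\cdot|_p$ on $\QQ_p^\times$ make $|\det Df(x)|_p$ constant on $U$, equal to $|\det A|_p$. I would then show that for every ball $B=x+p^{k}\ZZ_p^n\subseteq U$ with $k$ large, one has exactly
\[ f(B) = f(x) + Df(x)\,(p^k\ZZ_p^n). \]
This is the step I expect to be the main obstacle. The inclusion $\subseteq$ holds because the error $R(x,h)$, of size $\le C p^{-2k}$, is absorbed into the lattice $Df(x)p^k\ZZ_p^n$. That lattice contains a ball of radius $|\det A|_p\, p^{-k}$ up to a uniform constant, so absorption works once $k$ is large. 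The inclusion $\supseteq$ follows by rerunning the Hensel contraction of Step 1 inside $B$. By Step 2, $\mu(f(B)) = |\det Df(x)|_p\,\mu(B)$ for all such small balls. Every open subset of $U$ is a disjoint union of such balls, so the measures $E\mapsto \mu(f(E))$ and $E\mapsto\int_E |\det Df|_p\,dx$ agree on a generating $\pi$-system and hence on all Borel sets. This is precisely the identity $dy = |\partial(f_1,\dots,f_n)/\partial(x_1,\dots,x_n)|_p\,dx$.
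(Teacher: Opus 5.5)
Your proof is correct. Note, though, that the paper gives no proof of this lemma at all: it quotes the statement from Igusa's book (Proposition 7.4.1), remarking only that it holds in greater generality, so there is no internal argument to compare step by step.

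What you have written is a complete, self-contained version of the standard argument:
\begin{itemize}
\item a Newton/Hensel contraction $g(x)=x-A^{-1}(f(x)-y)$ for the local inverse;
\item the elementary-divisor decomposition $M=PDQ$, showing that a linear map scales Haar measure by $|\det M|_p$;
\item the exact local statement $f(x+p^k\ZZ_p^n)=f(x)+Df(x)\,p^k\ZZ_p^n$ for all sufficiently small balls;
\item a $\pi$--$\lambda$ uniqueness argument to pass from balls to all Borel sets.
\end{itemize}
The citation buys brevity and the more general form of the statement. Your route buys transparency. In particular, the ball-to-lattice-coset identity of your Step 3 is exactly what makes the paper's later uses rigorous. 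For example, in the proof of Theorem \ref{theorem:fixedand2} the map $t\mapsto 4^{-1}(1-t^2)$ is $2$-to-$1$ and critical at $t=0$. There the lemma can only be applied on the shells $p^k\ZZ_p^\times$, and your Step 3 justifies both the factor $\tfrac12\int_{\ZZ_p}|t|_p\,d\mu$ and the summation over shells.

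Two small points are worth stating explicitly:
\begin{itemize}
\item Injectivity of $f$ on all of $U$, not just for $y$ near $f(a)$, holds because the contraction constant of $g$ does not depend on $y$.
\item Once you have rescaled so the coefficients are integral, the absorption of $R(x,h)$ in Step 3 is cleanest via $Df(x)\ZZ_p^n\supseteq \det Df(x)\,\ZZ_p^n$, which follows by multiplying by the adjugate.
\end{itemize}
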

\section{An Asymptotic Formula}
\label{section:generalresult}
Let $k$ be a number field. Suppose that $f(x)\in \mathcal{O}_k[x]$ is irreducible over $k$, and let $G$ be the Galois group of its splitting field over $k$. To study when $f$ has a root in $k_{\mathfrak{p}}$, we first study its reduction modulo $\mathfrak{p}$. For primes $\mathfrak{p}$ not dividing the discriminant of $f$, the reduction $\overline{f} \in (\mathcal{O}_k/\mathfrak{p})[x]$ is separable. Therefore, by Hensel's lemma, $\overline{f}$ has a root in $\mathcal{O}_k/\mathfrak{p}$ if and only if $f$ has a root
in $k_{\mathfrak{p}}$.

Thus, for all but finitely many primes, determining whether $f$ has a root
in $k_{\mathfrak{p}}$ is equivalent to determining whether $\overline{f}$ has a root modulo $\mathfrak{p}$.
The latter condition can be analyzed using the Chebotarev Density Theorem.
 
\begin{lemma}[Chebotarev Density Theorem]\label{lem:Chebotarev}
    Let $L/k$ be a Galois extension of global fields, and let $C\subset G = \text{Gal}(L/k)$ be a conjugacy class. Then, the density of the set
    \[\{\gp : \gp \text{ is a prime of } k, \gp\nmid \D_{L/k}, \text{Frob}_\gp \in C\}\]
    is $\#C / \#G$.
\end{lemma}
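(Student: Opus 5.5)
The plan is the classical two-step argument: first prove the statement when $\mathrm{Gal}(L/k)$ is abelian, using class field theory and $L$-functions, and then reduce the general case to the cyclic case by Deuring's trick. I will work with Dirichlet density throughout. At the end I indicate how to upgrade to natural density via a Tauberian argument, which is what the application in Section \ref{section:generalresult} really needs.

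\textbf{Abelian case.} Suppose $G$ is abelian, so $C=\{\sigma\}$. By Artin reciprocity, there is a modulus $\mathfrak{m}$ divisible by all ramified primes such that the Artin map $\mathfrak{p}\mapsto \mathrm{Frob}_\mathfrak{p}$ factors through a surjection from the ray class group $\mathrm{Cl}_\mathfrak{m}(k)$ onto $G$. For a character $\chi$ of $G$, pulled back to a Hecke character, one has
\[\sum_{\mathfrak{p}}\chi(\mathfrak{p})N\mathfrak{p}^{-s}=\log L(s,\chi)+O(1)\qquad\text{as } s\to 1^+.\]
The function $L(s,\chi_0)$ has a simple pole at $s=1$. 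If also $L(1,\chi)\neq 0$ for every $\chi\neq\chi_0$, then orthogonality of characters gives
\[\sum_{\mathrm{Frob}_\mathfrak{p}=\sigma}N\mathfrak{p}^{-s}\sim \frac{1}{\#G}\log\frac{1}{s-1},\]
which is Dirichlet density $1/\#G$. The nonvanishing itself comes from factoring the Dedekind zeta function as
\[\zeta_L(s)=\prod_{\chi}L(s,\chi).\]
Since $\zeta_L$ has only a simple pole at $s=1$, no factor $L(s,\chi)$ with $\chi\neq\chi_0$ can vanish there.

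\textbf{Deuring reduction.} For general $G$, fix $\sigma\in C$, let $H=\langle\sigma\rangle$, and let $E=L^H$. Then $L/E$ is cyclic with group $H$. By the abelian case, the unramified primes $\mathfrak{P}$ of $E$ with $\mathrm{Frob}_{\mathfrak{P}}(L/E)=\sigma$ have density $1/\#H$. Primes of $E$ of residue degree $\ge 2$ over $k$ contribute a convergent sum, so I may restrict to primes with $f(\mathfrak{P}/\mathfrak{p})=1$. For such $\mathfrak{P}$, the Frobenius of $\mathfrak{p}=\mathfrak{P}\cap k$ in $L/k$ is conjugate to $\sigma$, and $N\mathfrak{P}=N\mathfrak{p}$.

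The combinatorial heart is to check the following count. If $\mathrm{Frob}_\mathfrak{p}\in C$, then the number of primes $\mathfrak{P}\mid\mathfrak{p}$ of $E$ with $f=1$ and $\mathrm{Frob}_\mathfrak{P}=\sigma$ is exactly $\#C_G(\sigma)/\#H$. To see this, count the primes $\mathfrak{Q}$ of $L$ over $\mathfrak{p}$ whose Frobenius is exactly $\sigma$: there are $\#C_G(\sigma)/\#H$ of them, and they biject with the desired $\mathfrak{P}$. Comparing Dirichlet series then gives
\[\delta(\{\mathfrak{p}:\mathrm{Frob}_\mathfrak{p}\in C\})\cdot\frac{\#C_G(\sigma)}{\#H}=\frac{1}{\#H},\]
so the density is $1/\#C_G(\sigma)=\#C/\#G$.

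\textbf{Main obstacle.} I expect the analytic input to be the hardest part: the meromorphic continuation of Hecke $L$-functions, and their nonvanishing at $s=1$ and on the whole line $\mathrm{Re}(s)=1$. Nonvanishing at $s=1$ alone yields only Dirichlet density. For natural density I would first try the Wiener--Ikehara Tauberian theorem applied to $-L'/L$. That requires the nonvanishing on $\mathrm{Re}(s)=1$, which I would obtain by the standard $3+4\cos\theta+\cos 2\theta\ge 0$ argument. For function fields, the same structure works, with the Weil bounds supplying the analytic input. Since the statement is classical, in the paper I would simply cite a standard reference for this part rather than reprove it.
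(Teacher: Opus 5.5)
Your sketch is correct and is essentially the proof in the reference the paper cites (the paper gives no argument of its own and defers to Neukirch, Theorem 13.4): the abelian case via ray class $L$-functions with $L(1,\chi)\neq 0$ obtained from $\zeta_L=\prod_\chi L(s,\chi)$, then reduction through the fixed field of $\langle\sigma\rangle$ using degree-one primes and the fiber count $\#C_G(\sigma)/\#\langle\sigma\rangle$. One caveat on your closing remark: for function fields the upgrade to natural density is false when $L/k$ contains a nontrivial constant field extension (e.g.\ $L=\mathbb{F}_{q^2}(t)$ over $\mathbb{F}_q(t)$, where $\mathrm{Frob}_{\mathfrak{p}}$ depends only on the parity of $\deg\mathfrak{p}$), so in that setting you should stay with Dirichlet density.
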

For a proof of Lemma \ref{lem:Chebotarev}, see for example \cite{Neukirch99} (Theorem 13.4). 
Let $L$ be the splitting field of $f$; then the group $\mathrm{Gal}(L/k)$ acts
on the roots of $f$. The Frobenius element $\mathrm{Frob}_{\mathfrak{p}}$
therefore acts as a permutation of the roots, and this permutation encodes
the factorization of $f$ modulo $\mathfrak{p}$. In particular, $f$ has a root in $k_{\mathfrak{p}}$ if and only if
$\mathrm{Frob}_{\mathfrak{p}}$ fixes at least one root of $f$. Using the Chebotarev Density Theorem, the probability that $f$ has a root mod $\mathfrak{p}$ is 
\[\frac{\#\{g\in G: g \text{ has a fixed point}\}}{\#G}.\]
We consider $\Phi_{n,d}$ as an element of $\QQ(c)[x]$ and use Lemma \ref{lem:Chebotarev} to calculate the probability that it has a root over finite fields. 
\begin{lemma}\label{lem: dynatomic gal}
    As in Theorem \ref{theorem:genresult}, define 
    \[\Delta_n\colonequals \frac{1}{n} \sum_{k|n}\mathcal{M}\left(\frac{n}{k}\right)d^k.\]
    Then, $\Phi_{n,d}$ is irreducible, and its Galois group over $\QQ(c)$ is isomorphic to $C_n\wr S_{\D_n}$. 
\end{lemma}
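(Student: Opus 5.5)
This is a known result (Bousch for $d=2$, Morton for general $d$), and the plan is the standard one: an upper bound from the cycle structure, and a matching lower bound from monodromy.

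\textbf{Irreducibility and the upper bound.} Irreducibility of $\Phi_{n,d}$ over $\QQ(c)$ is equivalent to irreducibility of the curve $Y_1(n;d)$. This holds because $\Phi_{n,d}$ is monic in $x$, so irreducibility over $\QQ(c)$ gives irreducibility in $\QQ[c,x]$ by Gauss's lemma. We take this from the irreducibility results of Bousch, Lau--Schleicher and Morton cited in Section \ref{section:background}.

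For the Galois group, let $L$ be the splitting field of $\Phi_{n,d}$ over $\QQ(c)$. Its roots are the $n\Delta_n$ points of formal period $n$ for $f_{d,c}$. For generic $c$ these all have exact period $n$, so they split into $\Delta_n$ orbits of size $n$ under $f=f_{d,c}$. Since $f$ has coefficients in $\QQ(c)$, every $\sigma\in G=\Gal(L/\QQ(c))$ commutes with $f$ on the roots. Hence $G$ permutes the $\Delta_n$ cycles and, on each cycle, acts as a power of the cyclic shift $f$. This embeds $G$ into the centralizer of a permutation that is a product of $\Delta_n$ disjoint $n$-cycles, and that centralizer is $C_n\wr S_{\Delta_n}$, of order $n^{\Delta_n}\Delta_n!$.

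\textbf{The lower bound.} It remains to show $G$ is the whole centralizer. Since $G$ contains the geometric monodromy group over $\CC(c)$, it is enough to produce enough monodromy over $\CC$. I would follow Bousch/Lau--Schleicher/Morton and analyze branch points in the $c$-plane.

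\begin{enumerate}
\item \emph{Primitive parabolic parameters.} At a parameter $c_0$ where exactly one $n$-cycle has multiplier $1$, two $n$-cycles collide. The local monodromy of a small loop around $c_0$ is a transposition of two cycles, matching them point by point, so in the wreath product it has the form $(\mathrm{id};\tau)$.
\item \emph{Satellite parameters.} At a parameter where a cycle of period $n/m$ has multiplier a primitive $m$-th root of unity, an $n$-cycle degenerates onto it. The local monodromy rotates that single $n$-cycle by a shift of order $m$, giving an element of the form $(f^{n/m},1,\dots,1;\mathrm{id})$. The case $m=n$, from the fixed-point cycle with multiplier a primitive $n$-th root of unity, gives a full $n$-cycle rotation of one cycle.
\item \emph{The loop around $c=\infty$.} For large $|c|$ the periodic points are coded by symbolic itineraries, and monodromy around $\infty$ acts on them by an explicit rule. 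Combined with irreducibility, this gives transitivity on all $n\Delta_n$ roots.
\end{enumerate}

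Transitivity on the roots makes the image of $G$ in $S_{\Delta_n}$ transitive. Together with the transpositions from step 1, it is generated by transpositions forming a connected graph, so it is all of $S_{\Delta_n}$. Conjugating the single-cycle rotation from step 2 by these permutations puts a generator of $C_n$ in each coordinate, giving the full base group $C_n^{\Delta_n}$. Therefore $G=C_n\wr S_{\Delta_n}$.

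\textbf{Main obstacle.} The hard step is the lower bound, specifically confirming that the branch points are simple as claimed. This means checking that at primitive parabolic parameters exactly two cycles collide, with a simple zero of the relevant discriminant factor, and that the connectivity graph is connected. This is the content of the Lau--Schleicher and Morton arguments, via the Mandelbrot/multibrot combinatorics for $d\ge2$. Rather than redo that analysis, I would cite it, as the paper already does for irreducibility, and only verify the centralizer embedding and the group-theoretic generation argument explicitly.
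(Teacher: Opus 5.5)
Your proposal is correct and takes essentially the same approach as the paper, whose proof simply cites Bousch (for $d=2$) and Lau--Schleicher and Morton (for general $d$); your centralizer embedding $G\subseteq C_n\wr S_{\D_n}$ and your monodromy outline (primitive parabolic parameters giving transpositions of cycles, satellite parameters giving rotations within one cycle) correctly sketch what those references establish. One refinement: the transposition-generation step must be run on the geometric monodromy group, so it needs irreducibility over $\CC(c)$ and the fact that these two types are the only finite branch points, both supplied by the cited works, and with these in hand your loop around $c=\infty$ is unnecessary.
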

\begin{remark}
    The notation $C_n\wr S_{\D_n}$ is the wreath product of $C_n$ with $S_{\D_n}$. A concrete description of this group is given in the proof of Lemma \ref{lem:nofixedpoints}.
\end{remark}
\begin{proof}
    This was conjectured by Morton and Patel \cite{Morton-Patel94}(Conjecture 2). It was proved for $d=2$ by Bousch \cite{Bousch92} using methods from complex dynamics. It was then proved separately for general $d$ using different methods by Lau and Schleicher \cite{Lau-Schleicher94} and Morton \cite{Morton96}. 
\end{proof}

To apply the Chebotarev Density Theorem, we calculate the probability that an element of the Galois group $C_n\wr S_{\D_n}$ has a fixed point. 

\begin{lemma}\label{lem:nofixedpoints}
    Let $n\geq 2, m\geq 1$. The probability that an element of $C_n \wr S_m$ acting on $nm$ points leaves no point fixed is
    \begin{align}\label{eq:derangements}
        \sum_{k=0}^m \sum_{i=0}^{m-k} \left(\frac{(-1)^i}{i!k!}\right)\left(\frac{n-1}{n}\right)^k.
    \end{align}
\end{lemma}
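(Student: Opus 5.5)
We model $C_n\wr S_m$ concretely as pairs $(\mathbf a,\sigma)$ with $\mathbf a=(a_1,\dots,a_m)\in (\ZZ/n\ZZ)^m$ and $\sigma\in S_m$. Such a pair acts on the $nm$ points $\{1,\dots,m\}\times \ZZ/n\ZZ$ by $(j,t)\mapsto(\sigma(j),t+a_j)$. The group has order $n^m m!$. We count the elements with no fixed point directly.

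A point $(j,t)$ is fixed exactly when $\sigma(j)=j$ and $a_j=0$, and this condition does not depend on $t$. So $(\mathbf a,\sigma)$ has no fixed point if and only if $a_j\neq 0$ for every fixed point $j$ of $\sigma$. Fix $\sigma$ with exactly $k$ fixed points. The coordinates $a_j$ for the $m-k$ non-fixed indices are unrestricted, and each of the $k$ coordinates at fixed indices has $n-1$ allowed values. This gives $n^{m-k}(n-1)^k$ valid choices of $\mathbf a$. The proportion of valid $\mathbf a$, given $\sigma$, is therefore $\left(\frac{n-1}{n}\right)^{k}$.

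Averaging over $S_m$, the desired probability is
\[
\sum_{k=0}^m \frac{\#\{\sigma\in S_m: \sigma \text{ has exactly } k \text{ fixed points}\}}{m!}\left(\frac{n-1}{n}\right)^k .
\]
Next we use the standard rencontres count. A permutation with exactly $k$ fixed points is determined by choosing the fixed set, in $\binom{m}{k}$ ways, and a derangement of the remaining $m-k$ points. The number of such derangements is $D_{m-k}=(m-k)!\sum_{i=0}^{m-k}\frac{(-1)^i}{i!}$, by inclusion–exclusion. Hence
\[
\frac{\binom{m}{k}D_{m-k}}{m!}=\frac{1}{k!}\sum_{i=0}^{m-k}\frac{(-1)^i}{i!}.
\]
Substituting this gives exactly \eqref{eq:derangements}.

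The only step that needs care is the fixed-point condition in the wreath product action. We must confirm that a point is fixed only when its block is fixed by $\sigma$ and the cyclic shift on that block is trivial. This uses that $C_n$ acts regularly, hence freely, on each block of size $n$. Once that is in place, the rest is the derangement formula. The hypothesis $n\ge 2$ only ensures that $n-1\ge 1$, so the formula is not degenerate.
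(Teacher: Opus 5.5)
Your proposal is correct and follows essentially the same route as the paper. Both arguments condition on the number $k$ of blocks fixed by the $S_m$-component, note that each such block independently needs a nontrivial rotation (probability $\frac{n-1}{n}$ each), and count permutations with exactly $k$ fixed points as $\binom{m}{k}D_{m-k}$ using the derangement formula; your explicit action $(j,t)\mapsto(\sigma(j),t+a_j)$ simply makes the paper's rotating-polygon picture precise.
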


\begin{proof}
    We model the action of $C_n \wr S_m$ on the $nm$ points by arranging the points into a collection of $m$ polygons, each with $n$ vertices. The group 
    $S_m$ acts by permuting the $n$-gons, and each copy of $C_n$ rotates a single $n$-gon. See Figure \ref{fig: n-gons} for an example.

    \begin{figure}
        \centering
        \begin{tikzpicture}[scale=2, every node/.style={circle, draw, inner sep=1.5pt}]

\node (A1) at (0,0) {1};
\node (A2) at (1,0) {2};
\node (A3) at (0.5,0.866) {3};

\node (B1) at (3,0) {4};
\node (B2) at (4,0) {5};
\node (B3) at (3.5,0.866) {6};

\draw (A1) -- (A2) -- (A3) -- (A1);
\draw (B1) -- (B2) -- (B3) -- (B1);

\draw[->, thick] (A1) to[bend right=20] (A2);
\draw[->, thick] (A2) to[bend right=20] (A3);
\draw[->, thick] (A3) to[bend right=20] (A1);

\draw[->, thick] (B1) to[bend right=20] (B2);
\draw[->, thick] (B2) to[bend right=20] (B3);
\draw[->, thick] (B3) to[bend right=20] (B1);

\node[draw=none] at (0.5,1.4) {$C_3$};
\node[draw=none] at (3.5,1.4) {$C_3$};

\draw[<->, thick, dashed] (1.5,0.4) -- (2.5,0.4);
\node[draw=none] at (2,0.8) {$S_2$};

\end{tikzpicture}
        \caption{The action of $C_3\wr S_2$ on $2$ triangles}
        \label{fig: n-gons}
    \end{figure}
    
    We first count the number of permutations in $S_m$ that fix exactly $k$ of the $n$-gons. Let $D_{\ell}$ denote the number of derangements of $\ell$ elements, i.e. the number of permutations in $S_{\ell}$ that have no fixed points. After choosing which $k$ of the $n$-gons remain fixed, the remaining $m-k$ polygons must be permuted with no fixed points. That is, the number of such permutations is 
    \[\binom{m}{k}D_{m-k}.\]
    For each of the $k$ fixed $n$-gons, if no point is fixed, then the corresponding element of $C_n$ must act by a nontrivial rotation. Only the identity element of a cyclic group fixes any point. Therefore, the probability that there is no fixed point given that there are $k$ fixed $n$-gons is
    \[\left(\frac{n-1}{n}\right)^k.\]
    Summing over all values $0\leq k\leq m$, the probability of choosing a permutation in $C_n\wr S_m$ with no fixed points is 
    \[\sum_{k=0}^m \binom{m}{k} \left(\frac{D_{m-k}}{m!}\right)\left(\frac{n-1}{n}\right)^k.\]
    Using a well-known formula for the number of derangements (see e.g. \cite{Stanley2011} Example 2.2.1), we can write 
    \[D_\ell = \ell! \sum_{i=0}^\ell \frac{(-1)^i}{i!}.\]
    Replacing $D_{m-k}$ with this identity gives the desired result. 
\end{proof}

\begin{proof}[Proof of Theorem \ref{theorem:genresult}]
    We would like to compute the probability that $\Phi_{n,d}$ has a root mod $p$. By Lemma \ref{lem: dynatomic gal}, the Galois group of $\Phi_{n,d}$ over $\QQ(c)$ is isomorphic to $C_n\wr S_{\Delta_n}$. Hilbert's irreducibility theorem (see \cite{Lang83} Chapter 9) guarantees that, outside of a thin set, all specializations $c \mapsto c_0$ preserve this Galois group. Applying Lemma \ref{lem:Chebotarev}, the probability that $\Phi_{n,d}$ has a root mod $p$ can be computed by calculating the probability of selecting an element of the Galois group that has a fixed point. Lemma \ref{lem:nofixedpoints} gives this probability as
    \[1-\sum_{k=0}^{\D_n}\sum_{i=0}^{\D_n - k} \left(\frac{(-1)^i}{i!k!}\right)\left(\frac{n-1}{n}\right)^k.\qedhere\]
\end{proof}

With this probability in hand, we see that as the cycle size $n$ tends to infinity, the probability that a map $x^d+c$ admits a $\QQ_p$-rational point of period $n$ goes to $0$. 
\begin{lemma}\label{cor:convergencerate}
    The sum \eqref{eq:derangements} satisfies the asymptotic formula
      \[
      \sum_{k=0}^m \sum_{i=0}^{m-k} \left(\frac{(-1)^i}{i!k!}\right)\left(\frac{n-1}{n}\right)^k
      = e^{-1/n} + O\left(\frac{1}{(m+1)! n^{m+1}}\right).
      \]
      In particular, it converges to~$1$ as $m,n\to\infty$.
\end{lemma}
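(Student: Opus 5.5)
The plan is to collapse the double sum into a single sum along anti-diagonals, and then recognize it as a truncated Taylor series of the exponential. Write $x = \frac{n-1}{n}$. The summation region $\{(k,i): 0\le k\le m,\ 0\le i\le m-k\}$ is exactly the triangle $\{(k,i)\in \ZZ_{\ge 0}^2 : k+i\le m\}$. So I would reindex by $j = k+i$, giving
\[
\sum_{k=0}^m \sum_{i=0}^{m-k} \frac{(-1)^i}{i!\,k!}\,x^k \;=\; \sum_{j=0}^{m} \frac{1}{j!}\sum_{k+i=j}\binom{j}{k} x^k(-1)^i \;=\; \sum_{j=0}^m \frac{(x-1)^j}{j!}.
\]
The last equality is the binomial theorem. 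Since $x-1 = -\frac1n$, the sum equals $\sum_{j=0}^m \frac{(-1/n)^j}{j!}$, which is the degree-$m$ Taylor polynomial of $e^{-1/n}$.

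The remaining step is to bound the tail $\sum_{j>m} \frac{(-1/n)^j}{j!}$. Since $n\ge 2$, the terms $\frac{1}{j!\,n^j}$ are positive and strictly decreasing in $j$, and their signs alternate. The alternating series estimate then bounds the tail in absolute value by its first term, $\frac{1}{(m+1)!\,n^{m+1}}$. Alternatively, Taylor's theorem with Lagrange remainder gives $\bigl|e^{-1/n}-\sum_{j\le m}\frac{(-1/n)^j}{j!}\bigr| \le \frac{1}{(m+1)!\,n^{m+1}}$, using $|e^{-\xi}|\le 1$ for $\xi\in[0,1/n]$. Either route yields the stated asymptotic formula, and in fact with implied constant $1$.

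For the final claim, as $n\to\infty$ we have $e^{-1/n}\to 1$. The error term also tends to $0$, uniformly in $m\ge 1$, since it is at most $\frac{1}{2n^2}$. Hence the sum tends to $1$ as $m,n\to\infty$, and by Theorem \ref{theorem:genresult} (with $m=\D_n\to\infty$) the limiting density tends to $0$.

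I do not expect any step to be a serious obstacle. The only real insight is noticing that the triangular region turns the double sum into a binomial convolution. Once the sum is written as $\sum_j (x-1)^j/j!$, everything else is a standard remainder estimate.
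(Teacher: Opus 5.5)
Your proposal is correct and matches the paper's proof step by step: you reindex along anti-diagonals $j=i+k$, apply the binomial theorem to obtain $\sum_{j=0}^{m}(-1/n)^j/j!$, and bound the Taylor tail of $e^{-1/n}$. Your alternating-series (or Lagrange remainder) argument also makes the implied constant $1$ explicit, which the paper leaves implicit.
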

\begin{proof}
    Let $j= i+k$. Then,
    \[\sum_{k=0}^m \sum_{i=0}^{m-k} \left(\frac{(-1)^i}{i!k!}\right)\left(\frac{n-1}{n}\right)^k = \sum_{j=0}^m \sum_{k=0}^j \left(\frac{(-1)^{j-k}}{(j-k)!k!}\right)\left(\frac{n-1}{n}\right)^k.\]
    Using the Binomial Theorem, 
    \begin{align}\label{eq: convergence series}
        \sum_{j=0}^m \left(\frac{(-1)^j}{j!}\right)\sum_{k=0}^j \binom{j}{k}\left(-\frac{n-1}{n}\right)^k &= \sum_{j=0}^m \left(\frac{(-1)^j}{j!}\right)\left(1 - \frac{n-1}{n}\right)^j&\nonumber\\
        &=\sum_{j=0}^m \frac{(-1/n)^j}{j!}.\nonumber&
    \end{align}
    Note that 
    \[e^{-1/n} = \sum_{j=0}^m \frac{(-1/n)^j}{j!} + \sum_{j=m+1}^\infty \frac{(-1/n)^j}{j!} = \sum_{j=0}^m \frac{(-1/n)^j}{j!} + O\left(\frac{1}{(m+1)!n^{m+1}}\right).\]
    Rearranging the equality gives the result. 
\end{proof}

\begin{example}\label{ex:period3density}
Let $n = 3$. The third dynatomic polynomial for $x^2 + c$ has degree $6$, so the asymptotic density can be calculated using the sum 
\[1- \sum_{k=0}^2\sum_{i=0}^{2-k} \left(\frac{2}{3}\right)^k\left(\frac{(-1)^i}{i!k!}\right) = 1-\frac{13}{18} = \frac{5}{18}.\]
\end{example}

We will see this value come up again in Section \ref{section:period3points}, when we use a parametrization of the dynatomic modular curve to calculate the density. While useful for all values of $n$, the main use of Theorem \ref{theorem:genresult} is to calculate approximate densities for $d$ and $n$ when the dynatomic curve is no longer genus $0$. 

\begin{example}
Table \ref{tab:approxden} shows the approximate asymptotic densities for the dynatomic curves associated to $x^2 + c$.

\begin{table}
    \centering
    \begin{tabular}{|c|c|c|}
        \hline
         $n$& Genus of Dynatomic Curve & Approximate Density \\
         \hline 
         $1$ & $0$ & $0.500$ \\
         \hline
        $2$ & $0$ & $0.500$\\
        \hline
        $3$ & $0$ & $0.278$\\
        \hline
        $4$ & $2$ & $0.221$\\
        \hline
        $5$ & $14$& $0.181$\\
        \hline
        $6$ &$34$ & $0.154$\\
        \hline
        $7$ & $134$ & $0.133$\\
        \hline
        $8$ &$285$ &$0.118$\\
        \hline
        $9$ &$745$ & $0.105$\\
        \hline
        $10$ & $1690$ & $0.095$\\
        \hline
    \end{tabular}
    \vspace{5pt}
    
    \caption{Approximate densities of polynomials $x^2 + c$ with a point of period $n$, with the genus of the corresponding dynatomic curve.}
    \label{tab:approxden}
\end{table}
\end{example}

\section{The case of quadratic polynomials}
\label{section:quadresult}
The transformation taking a polynomial of the form $Ax^2 + Bx + C$ to one of the form $x^2 + c$ preserves the dynamical properties of the original map. We therefore expect that studying densities of polynomials of the form $x^2 + c$ captures information about general quadratic polynomials with integral coefficients. The following lemma formalizes this expectation using integration. We first define three sets. Let
\begin{align*}
    S_1&\colonequals\{(A,B,C)\in \ZZ_p^3: Ax^2 + Bx + C \text{ has Property } \mathcal{P}\},&\\
    S_2&\colonequals\{(B,C)\in \ZZ_p^2: x^2 + Bx + C \text{ has Property } \mathcal{P}\}, \text{ and }&\\
    S_3&\colonequals\{C\in \ZZ_p: x^2 + C \text{ has Property } \mathcal{P}\},&
\end{align*}
where $\mathcal{P}$ is a dynamical property of quadratic polynomials that is invariant under $PGL_2$-transformation. 

\begin{lemma}\label{lem: measure invariance}
    Let $p>2$. The Haar measures of $S_1, S_2,$ and $S_3$ are equal, with respect to their ambient spaces. 
\end{lemma}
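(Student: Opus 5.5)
The plan is to realize all three sets as preimages of $S_3$ under explicit maps built from affine conjugations, and then to compare measures fiberwise using Fubini together with Lemma \ref{lem:igusa}. The map in $A$ is not measure-preserving where $A$ is a non-unit, and I have not resolved this; I expect it to be the main obstacle.

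\textbf{Step 1: $S_2$ and $S_3$.} For $f(x)=x^2+Bx+C$, conjugation by the translation $y\mapsto y-B/2$ gives
\[
f(y-B/2)+B/2 = y^2 + C + \tfrac{B}{2}-\tfrac{B^2}{4}.
\]
Since $p>2$, $2$ is a unit, so $t(B)\colonequals B/2-B^2/4\in\ZZ_p$. Because $\mathcal{P}$ is $\PGL_2$-invariant, $(B,C)\in S_2$ if and only if $C+t(B)\in S_3$. For fixed $B$, the map $C\mapsto C+t(B)$ is a translation, hence a Haar-measure-preserving bijection of $\ZZ_p$. Fubini then gives
\[
\mu(S_2)=\int_{\ZZ_p}\mu\{C : C+t(B)\in S_3\}\,dB=\mu(S_3).
\]

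\textbf{Step 2: $S_1$ and $S_3$.} I discard the null set $A=0$. For $A\neq 0$, conjugating by $x\mapsto y/A$ gives $A\,f(y/A)=y^2+By+AC$. Combining this with Step 1, $(A,B,C)\in S_1$ if and only if
\[
c(A,B,C)\colonequals AC+t(B)\in S_3 .
\]
Whenever $A\in\ZZ_p^\times$, the fiberwise map $C\mapsto AC+t(B)$ is a measure-preserving bijection of $\ZZ_p$, since $|A|_p=1$; this is the one-variable case of Lemma \ref{lem:igusa}. Integrating over $(A,B)$ with $A$ a unit then gives the correct contribution $\mu(\ZZ_p^\times)\,\mu(S_3)$ to $\mu(S_1)$.

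\textbf{Main obstacle: non-unit $A$.} It remains to handle $v_p(A)=r\ge 1$. There $C\mapsto AC+t(B)$ only covers the coset $t(B)+p^r\ZZ_p$, with Jacobian $|A|_p=p^{-r}$. I would need the pushforward of Haar measure on $\ZZ_p$ under $B\mapsto t(B)$ to mix these cosets uniformly, which I cannot establish. Writing $t(B)=\tfrac14\bigl(1-(1-B)^2\bigr)$ shows that $t$ is ramified at $B=1$, and its image distribution is that of a square: it is not uniform modulo $p^r$.

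I would first try to use the full $\PGL_2$-invariance to trade a non-unit leading coefficient for a unit one. One candidate is to conjugate by a Möbius map that moves $\infty$, followed by renormalization. However, such a map does not preserve polynomial form in general, so I am not confident it works. If it fails, the argument above proves the lemma only after restricting to $A\in\ZZ_p^\times$, that is, for primitive or unit-leading-coefficient quadratics. I would then check whether the lemma holds only in that restricted form.
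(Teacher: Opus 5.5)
Your Step~1 and the unit-$A$ part of Step~2 are correct and agree with the paper. The paper uses the shift $x\mapsto x-B/2$, and the rescaling $x\mapsto A^{-1}x$, which gives $\mu(S_1\cap\{A\in\ZZ_p^\times\})=\frac{p-1}{p}\mu(S_2)$. The gap you flag for $v_p(A)\ge 1$ is genuine, but it cannot be closed, because the claimed equality $\mu(S_1)=\mu(S_2)$ is false in general. The paper gets past this point only through the step
\[
\int_{A\in p^i\ZZ_p^*}\int_{(B,C)\in S_1^i} d\mu_{\ZZ_p^2}\,d\mu_{\ZZ_p}=\frac{1}{p^i}\int_{A\in \ZZ_p^*}\int_{(B,C)\in S_1^0} d\mu_{\ZZ_p^2}\,d\mu_{\ZZ_p}.
\]
This step tacitly assumes that the $(B,C)$-fiber over $A=p^iu$ has the same measure as the fiber over the unit $u$. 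By your Jacobian remark, that fiber has measure $p^i\int_{\ZZ_p}\mu\bigl(S_3\cap(t(B)+p^i\ZZ_p)\bigr)\,dB$. So the paper's step is exactly the equidistribution of $t(B)$ modulo $p^i$ that you could not prove, and that equidistribution fails.

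For a concrete counterexample, take $\mathcal{P}$ to be ``has a $\QQ_p$-rational fixed point'', which is the case $n=1$ of the paper. Then $(A,B,C)\in S_1$ if and only if $(B-1)^2-4AC$ is a square in $\QQ_p$, and $\mu(S_3)=\frac{p}{2(p+1)}$ by Theorem~\ref{theorem:fixedand2}. Suppose $A\in p\ZZ_p$ and $B\not\equiv 1\pmod p$. Then $(B-1)^2-4AC\equiv(B-1)^2\not\equiv 0\pmod p$, so it is a square by Hensel's lemma. Hence every fiber over $A\in p\ZZ_p\setminus\{0\}$ has measure at least $1-\frac1p$. Combining this with your unit computation gives
\[
\mu(S_1)\ \ge\ \frac{p-1}{p}\cdot\frac{p}{2(p+1)}+\frac1p\Bigl(1-\frac1p\Bigr)=\frac{p}{2(p+1)}+\frac{p^2-2}{2p^2(p+1)}\ >\ \mu(S_3)=\mu(S_2).
\]
In fact $\mu(S_1)=\frac12$, which is the probability that a Haar-random quadratic over $\ZZ_p$ has a root in $\QQ_p$.

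So no M\"obius-map trick can rescue the argument: the pushforward of Haar measure under $(A,B,C)\mapsto AC+t(B)$ is simply not Haar measure. What is true is exactly what you proved. First, $\mu(S_2)=\mu(S_3)$. Second, the equality with $S_1$ holds once you restrict to $A\in\ZZ_p^\times$, that is, conditionally on a unit leading coefficient.
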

\begin{proof}
    There is a monic linear change of variable $x\mapsto x - B/2$ to map polynomials of the form $x^2 + Bx + C$ to polynomials of the form $x^2 + C$. Therefore, $\mu_{\ZZ_p^2}(S_2)  = \mu_{\ZZ_p}(S_3),$ since maps of this form are measure-preserving.

    To show that the measure of $S_1$ and $S_2$ are equal, define the intermediate sets
    \[S_1^i\colonequals \{(A,B,C)\in p^i\ZZ_p^* \times \ZZ_p^2: Ax^2 + Bx + C \text{ has Property } \mathcal{P}\}.\]
    There is a map $S_1^0 \to S_2$ with $x\mapsto A^{-1}x$. Using this map along with Lemma \ref{lem:igusa},
    \[\mu(S_1^0)= \int_{A\in \ZZ_p^*}\int_{(B,C)\in S_1^0} d\mu_{\ZZ_p^2}d\mu_{\ZZ_p} = \mu(S_2)\int_{A\in \ZZ_p^*}d\mu_{\ZZ_p}=\frac{p-1}{p}\mu(S_2).\]
    By properties of the Haar measure on $\ZZ_p$,
    \begin{align*}
        \mu_{\ZZ_p^3}(S_1) &= \int_{A\in \ZZ_p}\int_{(B,C)\in S_1} d\mu_{\ZZ_p^2}d\mu_{\ZZ_p}&\\
        &= \sum_{i=0}^\infty\int_{A\in p^i\ZZ_p^*}\int_{(B,C)\in S_1^i} d\mu_{\ZZ_p^2}d\mu_{\ZZ_p}&\\
        &= \sum_{i=0}^\infty \frac{1}{p^i}\int_{A\in \ZZ_p^*}\int_{(B,C)\in S_1^0} d\mu_{\ZZ_p^2}d\mu_{\ZZ_p}&\\
         &= \frac{p-1}{p}\mu(S_2)\sum_{i=0}^\infty \frac{1}{p^i}&\\
         &= \mu_{\ZZ_p^2}(S_2).&\qedhere
    \end{align*}
\end{proof}

For $n= 1,2,$ and $3$, the dynatomic modular curve is genus $0$, and we can use a parametrization of the curve to calculate the density explicitly.  

\subsection{Points of periods $1$ and $2$}
The first and second dynatomic polynomials are quadratic. Calculating the density of $c$ values with a $\QQ_p$-rational fixed point or point of period $2$ boils down to counting $c$ values for which these polynomials have a square discriminant. 

\begin{proof}[Proof of Theorem \ref{theorem:fixedand2}]
    We first calculate the density of polynomials with fixed points. The discriminant of the first dynatomic polynomial $x^2  - x + c$ is $1 - 4c$. Suppose that 
\[t^2 = 1 - 4c.\]
Then, the $c$ values with square discriminant satisfy $c = 4^{-1}(1 - t^2)$. Therefore, for $p>2$, we define a map $\ZZ_p \to \ZZ_p$ given by 
\begin{align}\label{eq:fixedmap}
    t \mapsto 4^{-1}(1 - t^2)
\end{align}
and calculate the measure of its image using $p$-adic integration. Suppose that $\mathscr{I}$ is the image of \eqref{eq:fixedmap}. Using the theory of Igusa zeta functions and applying Lemma \ref{lem:igusa}, 
\[\int_{\ZZ_p} \mathds{1}_{\mathscr{I}} d\mu = \frac{1}{2} \int_{\ZZ_p} |t|_pd\mu = \frac{p}{2(p+1)}.\]
When $p = 2$, the element $4$ is not invertible, and so we do the computation by hand. First observe that $1 - 4c$ is a unit in $\ZZ_2$, and that it is a square if and only if \begin{align}\label{eq:squaresinZ2}
    1-4c \equiv 1 \pmod 8.
\end{align}
Suppose that \eqref{eq:squaresinZ2} holds. This is true if and only if $c \equiv 0 \pmod 2$. Therefore, $1-4c$ is a square in $\ZZ_2$ if and only if $c\equiv 0 \pmod 2$, which holds for $1/2$ the values $c\in \ZZ_2$. 

A similar argument yields the density for points of period $2$. The second dynatomic polynomial is $x^2 + x + c+ 1$, with discriminant $-3 - 4c$. Therefore, the map of interest is 
\[t \mapsto 4^{-1}(-3 - t^2),\]
which has the same image density as the fixed point case.
\end{proof}
\subsection{Points of period $3$}\label{section:period3points}
We recall that the third dynatomic polynomial for $x^2 + c$ is 
\[
\begin{aligned}
\Phi_{3,2}(c,x) ={}& x^6 + x^5 + (3c + 1)x^4 + (2c + 1)x^3 \\
&+ (3c^2 + 3c + 1)x^2 + (c^2 + 2c + 1)x  + (c^3 + 2c^2 + c + 1).
\end{aligned}
\]
In the previous cases, we used the fact that the first and second dynatomic polynomials are quadratic, which allowed us to compute the densities via integration. However, once $n\geq 3$, the degree of the dynatomic polynomial begins to increase. In particular, although the third dynatomic curve still has genus $0$, the parametrization is given by a rational map instead of a morphism, which complicates the integration due to the presence of singularities. For points of period $3$, we use a different approach to avoid these singularities. 
\begin{proposition}
Let $k$ be a field of characteristic $0$, and consider the ring \[R\colonequals \frac{k[c,x]}{(\Phi_{3,2}(c,x))}.\] Define
\[ \text{$\a(c,x) \colonequals c$ and $\b(c,x) \colonequals x^4 + 2cx^2 + x^2 + x$.} \]
Further define an action $\s:R \to R$ by the rule
 \[ \s(c)=c\quad\text{and}\quad \s(x)=x^2+c,\]
so for $h(c,x) \in R$, we have $\sigma(h)(c,x) = h(c, x^2 + c)$. Let $R^\s$ be the subring of $R$ invariant under the action of $\s$. The polynomials $\a$ and $\b$ generate $R^\sigma$. Furthermore, $\a$ and $\b$ satisfy the relation
\begin{align}\label{eq:cyclecurve1}
    \a^4 + 4\a^3 + 2\a^2 \b+ 5\a^2 + 4\a\b + 3\a + \b^2 + \b + 2 = 0.
\end{align} 
\end{proposition}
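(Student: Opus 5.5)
The proof has three parts. First I show that $\s$ is a well-defined automorphism of $R$ of order $3$. Then I identify $R^\s$ as a free $k[c]$-module of rank $2$ with basis $1,\b$. Finally I obtain \eqref{eq:cyclecurve1} as the characteristic quadratic of $\b$ over $k[c]$.

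\textbf{Step 1: $\s$ is an automorphism of order 3.} Since $\Phi_{3,2}(c,x)$ divides $f_{2,c}^3(x)-x$ and is monic of degree $6$ in $x$, the ring $R$ is a free $k[c]$-module with basis $1,x,\dots,x^5$. The points of formal period $3$ are carried to points of formal period $3$ by $f_{2,c}$, so $\Phi_{3,2}(c,x)$ divides $\Phi_{3,2}(c,x^2+c)$. Hence $\s$ is a well-defined $k[c]$-algebra endomorphism. Moreover $\s^3(x)=f_{2,c}^3(x)\equiv x$ in $R$, so $\s$ is an automorphism of order dividing $3$. It is not the identity, because $x^2+c-x$ is nonzero in $R$ (it has degree $2<6$).

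\textbf{Step 2: $R^\s$ is free of rank 2 with basis $1,\b$.} By Lemma~\ref{lem: dynatomic gal}, $\Phi_{3,2}$ is irreducible over $k(c)$, so $K=\operatorname{Frac}(R)=k(c)[x]/(\Phi_{3,2})$ is a field of degree $6$ over $k(c)$. The automorphism $\s$ generates a group of order $3$ acting on $K$, so by Artin's theorem the fixed field $K^\s$ has degree $2$ over $k(c)$. Since $R\otimes_{k[c]}k(c)=K$, the module $R^\s$ is a $k[c]$-submodule of $R\cong k[c]^6$ of rank $2$, namely $R\cap K^\s$.

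Next I check that $\b$ is invariant. Computing directly,
\[x+\s(x)+\s^2(x)=x+(x^2+c)+\bigl((x^2+c)^2+c\bigr)=\b+c^2+2c.\]
The left side is visibly $\s$-invariant, and $c^2+2c$ is fixed, so $\b\in R^\s$. The elements $1$ and $\b$ are $k(c)$-linearly independent because $\b\notin k[c]$, so they span $K^\s$ over $k(c)$.

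It remains to show that $1,\b$ span $R^\s$ over $k[c]$ itself, i.e.\ that the lattice they span is saturated. Take $h\in R^\s$ and write $h=a+b\b$ with $a,b\in k(c)$. In the basis $1,x,\dots,x^5$, the element $1$ has zero $x^4$-coordinate, while $\b$ has $x^4$-coordinate equal to $1$. Comparing $x^4$-coordinates gives $b\in k[c]$. Then $a=h-b\b$ lies in $R\cap k(c)=k[c]$. Hence $R^\s=k[c]\oplus k[c]\b=k[\a,\b]$, since $\a=c$.

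\textbf{Step 3: the relation \eqref{eq:cyclecurve1}.} Over $\overline{k(c)}$, the six roots of $\Phi_{3,2}$ split into two $3$-cycles. The two conjugates of $\b$ under $\operatorname{Gal}(K^\s/k(c))$ are $\b$ and $\b'$, obtained by summing the other cycle and subtracting $c^2+2c$. The sum of all six roots is $-1$, read off from the $x^5$ coefficient. Therefore
\[\b+\b'=-1-2(c^2+2c),\]
which agrees with the linear-in-$\b$ part of \eqref{eq:cyclecurve1}, namely $\b^2+(2\a^2+4\a+1)\b$.

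For the constant term $\b\b'$, I will compute directly in $R$ rather than through symmetric functions. I reduce $\b^2$ modulo $\Phi_{3,2}$ to a polynomial of degree at most $5$ in $x$. Since $\b^2$ lies in $R^\s$, Step 2 forces the result to be a $k[c]$-combination of $1$ and $\b$, and I expect it to equal
\[-(2c^2+4c+1)\b-(c^4+4c^3+5c^2+3c+2).\]
This reduction of a degree-$8$ polynomial modulo a sextic is the main obstacle. It is routine but error-prone, so I would verify it by computer algebra or by comparing $x^5,\dots,x^0$ coefficients. As a cross-check, the constant term must also equal $\b\b'$, obtained from the elementary symmetric functions of the roots.
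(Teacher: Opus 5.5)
Your proof is correct. Its skeleton matches the paper's: determine the $\s$-fixed space over $k(c)$, show that $k[c]\oplus k[c]\b$ is saturated in $R$, and verify \eqref{eq:cyclecurve1} by computation. You also check that $\s$ is well defined and of order $3$, which the paper takes for granted.

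The genuine difference is the first step. The paper writes out the $6\times 6$ matrix of $\s$ in the basis $1,x,\dots,x^5$ and row-reduces over $k(c)$. You instead obtain $[K^\s:k(c)]=2$ from Artin's theorem and prove $\b$ is invariant via the identity $x+\s(x)+\s^2(x)=\b+c^2+2c$. This avoids the matrix entirely and explains $\b$ as a shifted cycle sum. The price is that Artin's theorem needs $K$ to be a field for every $k$ of characteristic $0$. That means you need geometric irreducibility of $\Phi_{3,2}$ (which Bousch and Morton do prove), not just the statement over $\QQ(c)$ in Lemma~\ref{lem: dynatomic gal}. The paper relies on the same fact when it calls $R$ an integral domain.

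Your $x^4$-coordinate argument is a concrete form of the paper's argument that $R^\s/M$ is torsion yet embeds in the torsion-free module $R/M$.

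Your route also gives something for free that the paper proves separately, using Krull dimension, height-one primes in a UFD and an irreducibility check: that \eqref{eq:cyclecurve1} generates $\ker g$. Since $R^\s$ is free over $k[c]$ on $1,\b$ and the relation is monic in $\b$, dividing any element of $\ker g$ by the relation leaves a remainder $a(\a)+b(\a)\b$. That remainder must vanish by freeness.

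Finally, the constant-term check can be simplified. Substituting $\b=s-(c^2+2c)$, with $s$ a cycle sum, turns \eqref{eq:cyclecurve1} into $s^2+s+c+2=0$. So beyond your trace identity $s+s'=-1$, you only need $ss'=c+2$.
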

\begin{proof}
    Consider the ring $k[\a,\b]$. Define a map 
    \[g : k[\a,\b] \to k[c,x]/(\Phi_{3,2}),\quad g(\a)=c,\quad g(\b)=x^4 + 2cx^2 + x^2 + x. \]
    We first show that this map is surjective onto $R^\s$. Because $\Phi_{3,2}(c,x)$ is monic of degree $6$ in $x$, the ring $R$ is a free module of rank $6$ over $k[c]$. Note that $\s$ is $k[c]$-linear. Consider the basis $\{1,x, x^2,x^3,x^4,x^5\}$ for $R$. We write the matrix associated to $\s$ using this basis:
    \[\begin{bmatrix}
        1 & c& c^2 & -2c^2 - c - 1 & -2c^3 - c^2 - c&3c^3 + c^2 + 2c\\
        0&0      &             0     & -c^2 - 2c - 1        &           1& 2c^3 + 4c^2 + 3c\\
         0         &          1          &       2c      &      -3c - 1     &         -4c^2   &        5c^2 + c\\
         0        &           0      &             0    &        -2c - 1       &            0 &    4c^2 + 2c + 1\\
          0        &           0       &            1      &            -1       &         -2c  &               2c\\
          0         &          0       &            0          &        -1      &             0             &    2c
    \end{bmatrix}.\]
    The polynomials in $R^\sigma$ satisfy $\sigma(h) = h$. Because $R$ is a free $k[c]$-module, we may extend scalars to $k(c)$ and compute generators for this eigenspace using row reduction. This calculation 
    shows that the eigenspace with eigenvalue $1$ is the $k(c)$-span of  
    $\{1, \b(c,x)\}$. Let \[M \colonequals \Span_{k[c]}\{1, \beta(c,x)\}.\] The $k[c]$-modules $M$ and $R^\sigma$ satisfy $M\subseteq R^\sigma\subseteq R$.
    We will now show that in fact $M = R^\sigma$. 
    
    Consider the quotient $R^\sigma/M$. Since $M\subseteq R^\sigma$, we can write the exact sequence
    \[\begin{tikzcd}
0 \arrow[r] & M \arrow[r] & R^\sigma \arrow[r] & R^\sigma/M\arrow[r] & 0.
\end{tikzcd}\]
Because $k(c)$ is the fraction field of $k[c]$, tensoring is exact. After extending scalars to $k(c)$, the induced map $k(c)\otimes_{k[c]} M \to k(c)\otimes_{k[c]} R^\sigma$ is an isomorphism by the row reduction computation, and thus $k(c)\otimes_{k[c]} R^\sigma/M = 0$. 
It follows that $R^\sigma/M$ is torsion. On the other hand, since $\beta(c,x) = x^4 + (2c+1)x^2 + x$, a change of basis for $R$ gives
\[R = \Span_{k[c]}\{1, \beta, x^2, x^3, x^4, x^5\} \cong M \oplus \Span_{k[c]}\{x^2, x^3, x^4, x^5\}.\]
That is, $R/M\cong k[c]^4$ as $k[c]$-modules, and so $R/M$ is torsion-free. All submodules of torsion-free modules are torsion-free, so $R^\sigma/M$ is also torsion-free. This forces $R^\sigma/M$ to be $0$. We can conclude from this that $R^\sigma$ is contained in the image of $g$.

    Finally, we calculate $\ker(g)$. Since $R$ is an integral domain, then $\ker(g)$ is a prime ideal in $k[\a,\b]$. The map $\sigma$ has finite order, so it generates a finite group acting on $R$, and therefore $R$ is integral over $R^\sigma$. Because Krull dimension is preserved under integral extension, we have 
    \[\dim\left(\frac{k[c,x]}{(\Phi_{3,2})}\right)^\sigma = \dim \frac{k[c,x]}{(\Phi_{3,2})} = 1.\]
     Then, by the first isomorphism theorem, $k[\a,\b]/(\ker(g))$ is dimension $1$, implying that $\ker(g)$ is a height $1$ prime ideal. All height $1$ prime ideals in a unique factorization domain are principal. Therefore, a single irreducible element in $\ker(g)$ can be chosen as the generator. 
    
    Direct computation verifies that $\a(c,x)$ and $\b(c,x)$ satisfy \eqref{eq:cyclecurve1}, so it is in $\ker(g)$ by construction. Consider \eqref{eq:cyclecurve1} in the ring $k(\a)[\b]$. Since \eqref{eq:cyclecurve1} is quadratic in $\b$, then it is reducible if and only if its discriminant is a square in $k(\a)$. However, direct computation shows that the discriminant and its derivative have greatest common divisor equal to $1$, so it is irreducible. That is, \eqref{eq:cyclecurve1} is a generator for $\ker(g)$.       
\end{proof}

\begin{remark}
    The curve \eqref{eq:cyclecurve1} is an explicit affine model for $X_0(3;2)$.
\end{remark}
\begin{lemma}
    The curve \eqref{eq:cyclecurve1} has genus $0$ and can be parametrized by
    \[t\mapsto(\a(t),\b(t)) = \left(-\frac{2t^2 + 7t + 7}{(t+2)^2},\frac{t^3 + 3t^2 + 2t-1}{(t+2)^4}\right).\]
\end{lemma}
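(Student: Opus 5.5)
The plan is to exploit the fact that \eqref{eq:cyclecurve1} is quadratic in $\b$ and to complete the square. Collecting terms in $\b$, the relation reads
\[\b^2 + (2\a^2+4\a+1)\b + (\a^4+4\a^3+5\a^2+3\a+2) = 0.\]
Multiplying by $4$ and completing the square gives
\[(2\b + 2\a^2+4\a+1)^2 = (2\a^2+4\a+1)^2 - 4(\a^4+4\a^3+5\a^2+3\a+2).\]
A direct expansion shows that the quartic, cubic and quadratic terms on the right cancel, leaving the linear polynomial $-4\a-7$.

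So, after the invertible change of coordinates $(\a,\b)\mapsto(\a,\, s)$ with $s \colonequals 2\b+2\a^2+4\a+1$, the curve \eqref{eq:cyclecurve1} becomes the curve $s^2 = -4\a-7$. This is a parabola, hence a smooth rational curve, parametrized by $s\mapsto \left(-(s^2+7)/4,\ s\right)$. Since the coordinate change is a polynomial automorphism of $\AA^2$ (over any field of characteristic $0$, or indeed characteristic $\neq 2$), the original affine curve is isomorphic to a parabola. Therefore it is birational to $\PP^1$ and has genus $0$. This settles the genus claim without any appeal to singularity analysis or Riemann--Hurwitz.

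To recover the stated parametrization, substitute the Möbius transformation $s = t/(t+2)$. Then check that
\[-\frac{s^2+7}{4} = -\frac{t^2 + 7(t+2)^2}{4(t+2)^2} = -\frac{2t^2+7t+7}{(t+2)^2},\]
which matches $\a(t)$. Finally, solve $\b = \tfrac12\left(s - 2\a^2 - 4\a - 1\right)$ with $s=t/(t+2)$ and $\a=\a(t)$, bring everything over the common denominator $(t+2)^4$, and simplify the numerator to $t^3+3t^2+2t-1$.

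Because $t\mapsto s$ is a bijection of $\PP^1$ and $s\mapsto(\a,\b)$ is an isomorphism onto the curve, the map $t\mapsto(\a(t),\b(t))$ is a birational parametrization. Its inverse is given explicitly by $t = 2s/(1-s)$ with $s=2\b+2\a^2+4\a+1$.

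The only step I expect to take real care is this last simplification of $\b(t)$, which is a routine but error-prone rational-function computation. As a safeguard I would double-check it by substituting $(\a(t),\b(t))$ directly into \eqref{eq:cyclecurve1} and confirming that the result vanishes identically, for instance at a few numerical values of $t$ together with a degree count.
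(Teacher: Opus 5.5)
Your argument is correct, and it takes a genuinely different route from the paper's proof.

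\textbf{The paper's route.} It is pure verification. It takes the parametrization as given and checks by computer algebra that $(\alpha(t),\beta(t))$ satisfies \eqref{eq:cyclecurve1}. It then exhibits the rational inverse $(\alpha,\beta)\mapsto(\alpha^2+\beta-3)/(\alpha+2)$, so the map is birational and the genus is $0$.

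\textbf{Your route.} You derive everything from the identity $4F(\alpha,\beta)=s^2+4\alpha+7$ with $s=2\beta+2\alpha^2+4\alpha+1$. The cancellation down to $-4\alpha-7$ is right. The simplification you deferred also checks out: with $v=1/(t+2)$ one has $s=1-2v$ and $\alpha=-2+v-v^2$, and both $\tfrac12(s-2\alpha^2-4\alpha-1)$ and $\beta(t)$ equal $v-3v^2+2v^3-v^4$. Your inverse $t=2s/(1-s)$ agrees with the paper's on the curve; after cross-multiplying, their difference is exactly the defining polynomial of \eqref{eq:cyclecurve1}.

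\textbf{What your route buys.} It is checkable by hand and explains where the formulas come from. It also proves more than the lemma states:
\begin{enumerate}
\item The affine model is smooth and isomorphic to $\mathbb{A}^1$ via the polynomial coordinate $s$, in any characteristic $\neq 2$.
\item Irreducibility of \eqref{eq:cyclecurve1} is immediate, since its $\beta$-discriminant is the linear non-square $-4\alpha-7$.
\item Over a field $K$ of characteristic $\neq 2$, a $K$-rational point of the curve with $\alpha=c$ exists exactly when $-4c-7$ is a square in $K$.
\end{enumerate}

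\textbf{What the paper's route buys.} It is shorter to state, but it rests on a machine computation that is not shown and gives none of this structural information.
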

\begin{proof}
    One can verify with a computer algebra system that $\a(t)$ and $\b(t)$ satisfy \eqref{eq:cyclecurve1}. Furthermore, there is a rational inverse
    \[(\a, \b)\mapsto \frac{\a^2 + \b - 3}{\a + 2},\]
    which can also be verified by direct computation.
\end{proof}

With this parametrization, finding rational points on \eqref{eq:cyclecurve1} is straightforward. However, rational points on this curve do not correspond to maps $x^2 + c$ with a rational point of period $3$; instead, they correspond to maps $x^2 + c$ admitting a rational $3$-cycle in a weaker sense. That is, the $3$-cycle determined by $(\a,\b)$ on this curve may not consist of rational points. For example, when $t = 0,$ we get the polynomial $x^2 - 7/4$, which does not have rational $3$-periodic points, as can be seen from the factorization of the dynatomic polynomial
\[\Phi_{3,2}(-7/4, x) = \frac{1}{64}(8x^3 - 4x^2 + 18x + 1)^2.\]
This raises the following question: how do we determine which points on \eqref{eq:cyclecurve1} lift? Suppose we have a solution $(\a(t),\b(t))$ to \eqref{eq:cyclecurve1} that is the image of a rational root under the quotient map. Then, the equation
\[x^4 + 2\a(t)x^2 + x^2 + x - \b(t) = 0,\]
coming from the definition of $\b$, necessarily has rational solutions. Factoring this expression, we require rational solutions to 
\[\left(x - \frac{1}{t+2}\right)\left(x^3 + \frac{1}{t+2}x^2 - \frac{3t^2 + 10t + 9}{(t+2)^2}x + \frac{t^3 + 3t^2 + 2t-1}{(t+2)^3}\right) = 0.\]

\begin{remark}
    Note the extraneous root $x = 1/(t+2)$. This arises as the sum of the roots of the cubic and is an artifact of the way $\b$ is calculated. Occasionally, this extra root corresponds to a preperiodic point leading in to the cycle. For example, when $t= -2/3$, we get the preperiodic point $3/4$ for the map $x^2 - 29/16$. However, this behavior is not consistent in general. The relevant points in the $3$-cycle are the roots of the cubic.
\end{remark} 

It turns out that after clearing denominators, the curve 
\begin{align}\label{eq:cyclecurve2}
    (t+2)^3x^3 + (t+2)^2x^2 - (t+2)(3t^2 + 10t + 9)x + t^3 + 3t^2 + 2t-1 = 0
\end{align}
also has genus $0$.
\begin{lemma}
    The curve \eqref{eq:cyclecurve2} has genus $0$ and can be parametrized by
    \[s \mapsto (t(s), x(s)) = \left(-\frac{2(s^3 + 15s^2 + 12s - 1)}{3(s^3 + 6s^2 + 3s - 1)}, -\frac{s^3 + 12s^2 + 9s + 5}{2(s-1)(s+2)(2s+1)}\right).\]
\end{lemma}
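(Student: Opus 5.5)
The proof follows the same pattern as the previous lemma: verify a parametrization, exhibit a rational inverse, and conclude that the curve is birational to $\PP^1$. Let
\[F(t,x)\colonequals (t+2)^3x^3 + (t+2)^2x^2 - (t+2)(3t^2 + 10t + 9)x + t^3 + 3t^2 + 2t-1.\]

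\textbf{Step 1: the parametrization lies on the curve.} Substitute $t(s)$ and $x(s)$ into $F$ and clear the common denominator, which is a product of powers of $3(s^3+6s^2+3s-1)$ and $2(s-1)(s+2)(2s+1)$. The resulting numerator must vanish identically in $\QQ[s]$; this is a direct check in a computer algebra system, as in the previous lemma. A simplification helps: put $u=(t+2)x$. Then $F=0$ becomes
\[u^3+u^2-(3t^2+10t+9)u+(t^3+3t^2+2t-1)=0,\]
and
\[t(s)+2=\frac{2(s^3+3s^2-3s-2)... }{3(s^3+6s^2+3s-1)}\]
is read off directly from the formula for $t(s)$. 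So the verification reduces to checking that the single rational function $u(s)$ satisfies one cubic identity, which is a manageable computation.

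\textbf{Step 2: a rational inverse.} I would find $s$ as a rational function of $(t,x)$ on the curve. Since $t(s)$ has degree $3$ in $s$, the fiber over a given $t$ consists of three values of $s$, and these correspond to the three roots $x$ of the cubic. Knowing $x$ therefore singles out one of them. Concretely, compute
\[\gcd\bigl(3(s^3+6s^2+3s-1)\,t+2(s^3+15s^2+12s-1),\; 2(s-1)(s+2)(2s+1)\,x+(s^3+12s^2+9s+5)\bigr)\]
over the field $\QQ(t,x)$, reducing modulo $F$. The result should be linear in $s$, which gives $s=R(t,x)$. This elimination step is the main obstacle: the expression for $R$ may be messy. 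If it is, I would look for a simple form by using $u=(t+2)x$ and the $C_3$-symmetry coming from $\sigma$. Alternatively, it suffices to check that the map $s\mapsto(t(s),x(s))$ has degree $1$: a generic value of $t$ must have exactly three $s$-preimages, and these must give three distinct values of $x$.

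\textbf{Step 3: conclusion.} Finally, check that $F$ is irreducible over $\bar{\QQ}$. This follows from the fact that $\Phi_{3,2}$ is irreducible (Lemma \ref{lem: dynatomic gal}) together with the factorization established just before the statement, so that the curve is a component mapping onto the $t$-line with degree $3$. The map from $\PP^1$ is then dominant and birational onto the curve defined by \eqref{eq:cyclecurve2}. Hence its normalization is isomorphic to $\PP^1$, and the curve has genus $0$.
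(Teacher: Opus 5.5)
Your plan is essentially the paper's proof, which also verifies the parametrization by computer algebra and then simply exhibits the inverse $s=-\bigl(6t^2x^3-18t^2x+6t^2+24tx^3-63tx+24t+24x^3-60x+25\bigr)/(3t+2)$, so your Step~2 elimination just recovers this formula; your Step~3 is extra, and it follows more directly from Steps~1--2 than from Lemma~\ref{lem: dynatomic gal}, since the image component has degree $3=\deg t(s)$ over the $t$-line, as does $F$, which has no factor depending on $t$ alone. One slip: in fact $t(s)+2=\frac{2(s-1)(s+2)(2s+1)}{3(s^3+6s^2+3s-1)}$, so $u(s)=(t+2)x(s)=-\frac{s^3+12s^2+9s+5}{3(s^3+6s^2+3s-1)}$, which makes your Step~1 check even simpler.
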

\begin{proof}
    A computer algebra system can verify that the given parametrization satisfies \eqref{eq:cyclecurve2}. It has rational inverse 
    \[(t,x)\mapsto \frac{-(6t^2x^3 - 18t^2x + 6t^2 +
  24tx^3 - 63tx + 24t +
  24x^3 - 60x + 25)}{(3t + 2)}.\qedhere\]
\end{proof}

Combining these two rational parametrizations, we compute the rational points on \eqref{eq:cyclecurve1} that lift to rational points above. We define the maps 
\begin{align*}
    \psi(t)&\colonequals -\frac{2t^2 + 7t + 7}{(t+2)^2},&\\[5pt]
    \varphi(s)&\colonequals -\frac{2(s^3 + 15s^2 + 12s - 1)}{3(s^3 + 6s^2 + 3s - 1)}.
\end{align*}
The image of the composition $\psi\circ\varphi$ gives $c$ values for which $x^2 + c$ has a rational $3$-periodic point.

We now analyze the size of the image of $\psi\circ \varphi$ on $\PP^1(\FF_p)$, which will give an approximation of the density of $c\in\ZZ_p$ with the desired property. Note that we exclude the primes $p = 2,3$ since $\psi\circ\varphi$ has bad reduction at these primes. 

\begin{lemma}\label{lem:phiimagesize}
    Let $p\geq 5$. Over $\PP^1(\FF_p)$, the density of the image of $\varphi$ is 
    \[\begin{cases}
        \frac{1}{3}, & \text{ if } p \equiv 2\pmod 3,\\[5pt]
        \frac{p+5}{3(p+1)}, &\text{ if } p \equiv 1\pmod 3.
    \end{cases}\]
\end{lemma}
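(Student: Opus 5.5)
The plan is to recognize $\varphi$ as a cyclic Galois cover of degree $3$, namely the quotient map by an order-$3$ automorphism of the $s$-line. Its image on $\PP^1(\FF_p)$ can then be counted orbit by orbit.

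Conceptually, $s$ parametrizes a curve covering \eqref{eq:cyclecurve1}, and $\varphi$ forgets which point of the $3$-cycle we chose. So the deck transformation should be induced by $\sigma\colon x\mapsto x^2+c$. Concretely, I will look for a Möbius transformation $\tau$ of order $3$ with $\varphi\circ\tau=\varphi$. A natural guess comes from the poles of $x(s)$ at $s=1,-2,-\tfrac12$, which should be permuted cyclically. The map $\tau(s)=-1/(s+1)$ does this: $1\mapsto -\tfrac12\mapsto -2\mapsto 1$, and $\tau^3=\mathrm{id}$. I would verify $\varphi(\tau(s))=\varphi(s)$ by direct computation, as the paper does elsewhere. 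Since $\deg\varphi=3=\#\langle\tau\rangle$, the fibres of $\varphi$ over $\overline{\FF}_p$ are exactly the $\langle\tau\rangle$-orbits.

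Next I check that all of this survives reduction mod $p$ for $p\ge5$. The resultant of the numerator $2(s^3+15s^2+12s-1)$ and the denominator $3(s^3+6s^2+3s-1)$ should be, up to sign, a product of powers of $2$ and $3$. Then $\bar\varphi$ still has degree $3$ over $\FF_p$. Also $\tau$ has determinant $1$, so it remains an automorphism of order $3$ of $\PP^1_{\FF_p}$, and the identity $\bar\varphi\circ\tau=\bar\varphi$ persists. Hence $\bar\varphi$ is the quotient map $\PP^1\to\PP^1/\langle\tau\rangle$ over $\FF_p$.

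The count then goes as follows. Since $\tau$ is defined over $\FF_p$, it acts on $\PP^1(\FF_p)$, and each orbit maps to a single point of the image. Distinct orbits have distinct images, because the full geometric fibre is one orbit. The fixed points of $\tau$ satisfy $s(s+1)=-1$, i.e.\ $s^2+s+1=0$. These are the two ramification points of $\bar\varphi$, and they lie in $\FF_p$ if and only if $p\equiv1\pmod3$.

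If $p\equiv 2\pmod 3$, every orbit in $\PP^1(\FF_p)$ has size $3$, so the image has $(p+1)/3$ points, giving density $\tfrac13$. If $p\equiv 1\pmod3$, there are two fixed points and $(p-1)/3$ free orbits, so the image has $(p-1)/3+2=(p+5)/3$ points, giving density $\frac{p+5}{3(p+1)}$.

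The main obstacle is finding and verifying the automorphism $\tau$. If the guess $s\mapsto -1/(s+1)$ fails, I would solve $\varphi(\tau(s))=\varphi(s)$ for a general Möbius $\tau$. This is done by matching the poles of $\varphi$, i.e.\ the roots of $s^3+6s^2+3s-1$, which $\tau$ must permute. The remaining steps (the resultant check and the orbit count) are routine.
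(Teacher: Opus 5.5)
Your proof is correct, and it takes a genuinely different route from the paper's. The paper computes $\varphi'(s)=6(s^2+s+1)^2/(s^3+6s^2+3s-1)^2$ and locates the ramification at the roots of $s^2+s+1$. It then asserts that elsewhere $\varphi$ is ``a true $3$ to $1$ map onto its image.'' Ramification data alone does not give this: a degree-$3$ map can be unramified over an $\FF_p$-point whose fibre is one rational point plus a conjugate pair.

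Your deck transformation $\tau(s)=-1/(s+1)$ supplies exactly the missing ingredient. Because $\tau$ is defined over $\FF_p$ and $\bar\varphi$ is the quotient by $\langle\tau\rangle$, every fibre is a $\tau$-orbit. Each fibre therefore lies entirely inside or entirely outside $\PP^1(\FF_p)$, and the count reduces to an orbit count.

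Your guess works with no fallback. Write $N=s^3+15s^2+12s-1$ and $D=s^3+6s^2+3s-1$. One checks that $(s+1)^3N(\tau(s))=-N(s)$ and $(s+1)^3D(\tau(s))=-D(s)$. Coprimality of $N$ and $D$ mod $p\ge 5$ follows at once from $N-D=9s(s+1)$ together with $N(0)=-1$ and $N(-1)=1$; the resultant is $\pm 3^6$.

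The paper's derivative computation is quicker for locating the two special points. Your approach recovers them as the fixed points of $\tau$, and it also justifies the full-rational-fibre claim the paper leaves implicit. It matches your conceptual picture as well: $\varphi$ is the degree-$3$ quotient from a model of $X_1(3;2)$ to $X_0(3;2)$, with $\tau$ playing the role of $\sigma$.
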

\begin{proof}
    The critical points of $\varphi$ are the primitive cube roots of unity, as can be seen from its derivative
    \[\varphi'(s) = \frac{6(s^2 + s + 1)^2}{(s^3 + 6s^2 + 3s -1)^2}.\]
    The primitive cube roots of unity exist in $\FF_p$ only when $p\equiv 1 \pmod 3$. For $p\equiv 2\pmod 3$, this means that $\varphi$ acts as a true $3$ to $1$ map onto its image, so the density is 
    \[\frac{p+1}{3(p+1)} = \frac{1}{3}.\]
    For primes $p\equiv 1\pmod 3$, the two cube roots of unity map to two unique points with multiplicity, while the remaining points map $3$ to $1$. This gives
    \[2 + \frac{p-1}{3} = \frac{p+5}{3}\]
    points in the image. 
\end{proof}

Define an involution \[\iota(x) = \frac{-x}{x+1},\]
and observe that 
\[\psi(\iota(x)) = -\frac{2x^2 -7x(x+1) + 7(x+1)^2}{(2(x+1) - x)^2} = \psi(x).\]
That is, $\iota$ pairs up points that map to the same image under $\psi$.
\begin{definition}
    If $\varphi(s_1)$ and $\varphi(s_2)$ are related by $\iota(\varphi(s_1)) = \varphi(s_2)$, then $\varphi(s_1)$ and $\varphi(s_2)$ are in a \textit{collision pair}.
\end{definition}
We can describe the size of the image of $\psi\circ\varphi$ as the size of the image of $\varphi$ minus the number of collision pairs. We make this idea formal in Proposition \ref{prop:Fpdensity}. 

\begin{proposition}\label{prop:Fpdensity}
    Let $p\neq 2,3,7$. The size of the image of $\psi\circ\varphi$ over $\PP^1(\FF_p)$ is 
    \[I - \frac{N - 12B - 9K}{18},\]
    where 
     \[
      N \colonequals \#\bigl\{ (s_1,s_2)\in \FF_p^2 \mid
      \varphi(s_1)\varphi(s_2) + \varphi(s_1) + \varphi(s_2) = 0 \bigr\}, 
      \]
    $B$ is the number of critical values of $\varphi$ in a collision pair, and 
     \begin{align}\label{eq:KandI}
        K &\colonequals \begin{cases}
        2, &\text{ if there is } s\in \PP^1(\FF_p) \text{ with } \varphi(s)= 0,\\
        1, & \text{ otherwise},
        \end{cases} \\
        I &\colonequals \begin{cases}
        \frac{1}{3}, & \text{ if } p \equiv 2\pmod 3,\\[5pt]
        \frac{p+5}{3(p+1)}, &\text{ if } p \equiv 1\pmod 3.
        \end{cases}
      \end{align}
\end{proposition}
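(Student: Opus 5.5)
The plan is to reduce the count to a fiber-counting identity for the map $\psi$. Throughout, I read $I$ as the \emph{number} of points in the image of $\varphi$ on $\PP^1(\FF_p)$, i.e.\ $(p+1)/3$ or $(p+5)/3$; this is the quantity computed in the proof of Lemma~\ref{lem:phiimagesize}, whereas the display for $I$ in the statement literally gives the density.

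\textbf{Step 1: the fibers of $\psi$.} Since $\psi$ has degree $2$ and $\psi\circ\iota=\psi$, every fiber of $\psi$ on $\PP^1(\FF_p)$ has the form $\{u,\iota(u)\}$. Solving $\iota(u)=u$ gives $u^2+2u=0$, so the fixed points of $\iota$ are $u=0$ and $u=-2$; these are the only singleton fibers. Consequently
\[
\#\Image(\psi\circ\varphi)=\#\Image(\varphi)-\#\{\text{unordered collision pairs }\{a,\iota(a)\},\ a\neq\iota(a),\ a,\iota(a)\in\Image\varphi\}.
\]
So it suffices to show that the number of collision pairs equals $(N-12B-9K)/18$.

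\textbf{Step 2: counting collision pairs through $N$.} For $a\neq -1$, the relation $ab+a+b=0$ is equivalent to $b=\iota(a)$. Hence
\[
N=\sum_{a}\#\varphi^{-1}(a)\cdot\#\varphi^{-1}(\iota(a)),
\]
where $a$ runs over finite values whose $\iota$-partner is also finite, and preimages are taken in $\FF_p$. By Lemma~\ref{lem:phiimagesize} and its proof, a non-critical value has exactly $3$ preimages. A critical value has exactly $1$ preimage, because $\varphi'$ vanishes to order $2$ at a primitive cube root of unity, so the ramification there is total.

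This gives the following contributions to $N$.
\begin{itemize}
\item A collision pair $\{a,b\}$ of two non-critical values contributes $2\cdot 3\cdot 3=18$.
\item A pair in which exactly one value is critical contributes $2\cdot 3\cdot 1=6$. This is a deficit of $12$ for each critical value lying in a collision pair, which accounts for the term $-12B$.
\item Each $\iota$-fixed value $a\in\{0,-2\}$ lying in the image contributes $3\cdot 3=9$, and these contributions are not pair contributions. This accounts for the term $-9K$, provided I show that exactly one of $0,-2$ is always in the image of $\varphi$ and the other is in the image exactly when $\varphi(s)=0$ has a solution.
\end{itemize}

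\textbf{Step 3: the loose ends, which I expect to be the main obstacle.} Several degenerate configurations must be excluded or absorbed, by explicit computation with $\varphi$.

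\begin{itemize}
\item A collision pair with both values critical would contribute $2$ rather than $18-24$. I expect the two critical values (the images of the primitive cube roots of unity) not to be $\iota$-partners for $p\neq 7$; I would check this by computing $\iota$ of one critical value and comparing it with the other, and presumably this comparison is where $p=7$ is excluded.
\item I must show that $-2$ (or $0$) always lies in the image with full fiber of size $3$, and that $\iota$-fixed values are never critical. This amounts to factoring $\varphi(s)+2$ and $\varphi(s)$ modulo $p$.
\item The boundary points need separate handling: $s=\infty$, which is excluded from $N$ since $N$ counts $(s_1,s_2)\in\FF_p^2$; the value $\varphi(s)=\infty$; and the partner pair $a=-1\leftrightarrow\infty$. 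For example, $\varphi(\infty)=-2/3$ and $\iota(-2/3)=2$. I must check that the omitted $s=\infty$ and the poles of $\varphi$ either create no collision pairs or are exactly compensated in the formula. I would first compute $\varphi^{-1}(2)$, $\varphi^{-1}(-1)$ and the poles of $\varphi$ directly.
\end{itemize}

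Once these special values are verified, dividing $N-12B-9K$ by $18$ counts the unordered collision pairs, and Step 1 then yields the stated formula.
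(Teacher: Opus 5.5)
Your decomposition is the paper's own: the image of $\varphi$ minus the collision pairs, with the pairs detected through $N$ using weights $18$, $6$, $9$. Two of your three Step~3 checks go through. Writing $v=\varphi(\omega)=-2(\omega+5)/(3(\omega+2))$ for the critical values, one gets $v_1v_2+v_1+v_2=-2/9\neq 0$, so the two critical values are never partners. Also $v\in\{0,-2\}$ forces $p\mid 21$; this, not the partner comparison, is where $p=7$ enters.

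\textbf{The third check fails, and with it the statement as written.} The fibres of $\varphi$ are the orbits of the order-$3$ deck transformation $\tau(s)=-1/(s+1)$. One checks directly that $\varphi\circ\tau=\varphi$; this is also what actually justifies the ``exactly three preimages'' claim you import from Lemma~\ref{lem:phiimagesize}, whose proof only asserts it. The orbit of $\infty$ is $\{\infty,0,-1\}$, so the value $-2/3=\varphi(\infty)$ has only two preimages in $\FF_p$.

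Now suppose the partner $\iota(-2/3)=2$ is in the image of $\varphi$, i.e.\ $4s^3+33s^2+21s-4$ has a root mod $p$. This happens for a positive proportion of primes. Then the pair $\{-2/3,2\}$ contributes $12$ rather than $18$ to $N$, and nothing in the formula compensates. Concretely, take $p=23$:
\begin{itemize}
\item the image of $\varphi$ is $\{7,21,2,11,8,18,9,13\}$;
\item the only collision pair is $\{7,2\}=\{-2/3,2\}$;
\item $N=9+6+6=21$, $K=1$, $B=0$.
\end{itemize}
The formula then gives $8-\tfrac{12}{18}$, which is not an integer, while $\psi\circ\varphi$ actually has $7$ image points.

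\textbf{The repair is to change the definition of $N$.} Write $\varphi=P/Q$ and count $N$ on $\PP^1(\FF_p)\times\PP^1(\FF_p)$ via the bihomogenized equation $P(s_1)P(s_2)+P(s_1)Q(s_2)+Q(s_1)P(s_2)=0$. This is exactly the pullback under $\varphi\times\varphi$ of the graph of $\iota$ on $\PP^1\times\PP^1$. It therefore also picks up the pair $\{\infty,-1\}$ whenever both values lie in the image; your restriction to finite values with finite partners would silently drop that pair.

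With this definition, every non-critical image value has exactly three preimages, and your Step~2 bookkeeping becomes exact. Then $(N-12B-9K)/18$ counts the collision pairs; for $p=23$ one gets $N=27$ and the correct answer $7$.

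The paper's proof has the same oversight: it assumes all nine pairs over each $\{t,\iota(t)\}$ lie in $\FF_p^2$. You located the weak point correctly, but it has to be repaired by redefining $N$; it cannot be verified as stated.
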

\begin{proof}
    As stated in the discussion before the proposition, the size of the image of $\psi\circ \varphi$ is the size of the image of $\varphi$ minus the number of collision pairs. By Lemma \ref{lem:phiimagesize}, the size of the image of $\varphi$ is $I$. That is, we only need to calculate the number of collision pairs.
    
    Suppose that $\varphi(s_1) = \iota(\varphi(s_2))$. This is equivalent to
    \begin{align}\label{eq:collisioncurve}
        \varphi(s_1)\varphi(s_2) + \varphi(s_1) + \varphi(s_2) = 0.
    \end{align} 
    Let $\{t,\iota(t)\}$ be a pair with $t\neq \iota(t)$, and consider their preimages under $\varphi$. There are $9$ pairs $(s_1,s_2)$ with $\varphi(s_1) = t$ and $\varphi(s_2) = \iota(t)$. Because \eqref{eq:collisioncurve} is symmetric in $s_1$ and $s_2$, then each pair $(s_1,s_2)$ contributes its reversal $(s_2,s_1)$ as well. That is, per unordered pair $\{t, \iota(t)\}$, there are $18$ solutions to \eqref{eq:collisioncurve}. 

    For the two fixed points of $\iota$, which are $t= 0$ and $t=-2$, this symmetry no longer produces distinct pairs, so each only contributes $9$ solutions. Because the total image size is changed by these fixed points, we would like to know how many show up in the image of $\varphi$. Calculating $\varphi(1)$ shows that $t=-2$ is always in the image of $\varphi$. With this in mind, define $K$ as in \eqref{eq:KandI}.
    
    When $p\equiv 1\pmod 3$, the critical points of $\varphi$ are defined over $\FF_p$. Each critical value has only a single preimage, so we also count the number of critical values $t$ with $\psi(t) = \psi(\iota(t))$. Let $B$ be the number of such points. For each $t$, there are exactly $6$ pairs $(s_1,s_2)$ satisfying \eqref{eq:collisioncurve}. 

    Let $N$ be the total number of pairs $(s_1,s_2)\in \FF_p^2$ satisfying \eqref{eq:collisioncurve}. Then, the number of points in the image of $\varphi$ mapping to the same image under $\psi$ is given by 
    \[\frac{N - 9K - 12B}{18}.\qedhere\]
\end{proof}

\begin{remark}
    The prime $7$ is excluded from Proposition \ref{prop:Fpdensity} because it has the unique property among primes that one of the critical points of $\varphi$ maps to $0$, a fixed point of the involution $\iota$. Indeed, if $\omega$ is a primitive cube root of unity, then 
    \[\varphi(\omega) = -\frac{2(\omega+5)}{3(\omega+2)}.\]
    If $\varphi(\omega) = 0$, then $\omega = -5$. Since $\omega$ is a cube root of unity in $\FF_p$,
    \[\omega^2 + \omega + 1 = 21 \equiv 0\pmod p,\]
    forcing $p\mid 21$. The only prime greater than $3$ with this property is $p = 7$. A similar argument shows that there are no primes with a primitive cube root mapping to $-2$, the other fixed point of $\iota$.
    
    This fundamentally changes the image count when $p = 7$, because $0$ is simultaneously a fixed point of $\iota$ and a critical value of $\varphi$. However, the prime $7$ is small enough that we can calculate by hand, yielding an image size of $4$. See Remark \ref{rmk:geometry} for another case when $p = 7$ is exceptional in this context.
\end{remark}

With the density approximation from Proposition \ref{prop:Fpdensity}, we can analyze the ramification of $\psi\circ \varphi$ to finish the density calculation. Intuitively, any obstruction to lifting the residue classes of the image to $\ZZ_p$ comes from the critical points of $\psi\circ\varphi$.  The derivative is
  \[(\psi\circ\varphi)'(x) = \frac{-27(x^2 + x + 1)^2(x^3 +15x^2 + 12x - 1)}{2(x-1)^3(x+2)^3(2x+1)^3}.\]
That is, potential obstructions come from the images of the roots of the polynomials $x^2 + x + 1$, $x^3 +15x^2 + 12x - 1$, and $\infty$. It turns out (and will be justified in the proof of Theorem \ref{theorem:per3}) that the roots of $x^3 +15x^2 + 12x - 1$ all map to the same residue class. Therefore, there are at most $4$ residue classes in the image that do not lift fully to $\ZZ_p$. As $p$ gets large, the contributions from these residue classes become negligible, so the asymptotic density will be the same as the density over finite fields. However, studying ramification will allow us to be more precise.    

\begin{lemma}\label{lem: ram points}
    The polynomial $x^3 +15x^2 + 12x - 1$ splits over $\FF_p$ if and only if $p\equiv \pm 1 \pmod 7$. Furthermore, if it splits, it splits completely.  
\end{lemma}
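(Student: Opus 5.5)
The plan is to show that $g(x)\colonequals x^3+15x^2+12x-1$ defines a cyclic cubic extension of $\QQ$, namely the real subfield $\QQ(\zeta_7)^+=\QQ(\zeta_7+\zeta_7^{-1})$ of the $7$th cyclotomic field. The splitting behaviour mod $p$ then follows from the Frobenius description used in Section \ref{section:generalresult}, applied to this abelian extension.

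\textbf{Step 1: irreducibility and Galois group.} First, $g$ is irreducible over $\QQ$, since its only possible rational roots are $\pm1$ and $g(1)=27$, $g(-1)=1$. Next I compute the discriminant with the standard cubic formula $a^2b^2-4b^3-4a^3c-27c^2+18abc$, where $(a,b,c)=(15,12,-1)$. This gives
\[\Disc(g)=32400-6912+13500-27-3240=35721=189^2=3^6\cdot 7^2.\]
Since the discriminant is a square, the Galois group of $g$ is $A_3\cong C_3$, and its splitting field $L$ is a cyclic cubic field.

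\textbf{Step 2: the "completely" clause.} For $p\nmid 3\cdot 7$, the reduction $\overline g$ is separable. The Frobenius at $p$ is then an element of $C_3$ acting on the three roots. It is either the identity, in which case $\overline g$ has three roots in $\FF_p$, or a $3$-cycle, in which case $\overline g$ is irreducible. Equivalently, $\overline g$ is separable with square discriminant, so the permutation it induces is even and the factorization type $(2,1)$ is impossible. Either way, if $g$ has a root mod $p$ then it splits completely.

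\textbf{Step 3: identifying $L$.} I would exhibit a rational (Möbius or polynomial) change of variables carrying a root of $g$ to a root of the minimal polynomial $y^3+y^2-2y-1$ of $\zeta_7+\zeta_7^{-1}$. Concretely, I would search for $x=(\alpha y+\beta)/(\gamma y+\delta)$, or for a quadratic expression in $y$, such that the substituted expression, after clearing denominators, is a multiple of $y^3+y^2-2y-1$. The factor $3^6$ in the discriminant suggests that a scaling by $3$ is involved. This is the step I expect to be the main obstacle, since it is a search rather than a deduction.

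As a fallback, I would argue ramification-theoretically. Cyclic cubic fields have conductor $9$, a prime $\ell\equiv1\pmod 3$, or a product of such. Here only $3$ and $7$ can ramify, so the conductor is one of $7$, $9$, or $63$. I would then show that $3$ is unramified in $L$ by finding a translate $g(x+k)$, or a rescaling, whose discriminant at $3$ is small enough, or which is $3$-Eisenstein-free with a separable reduction. This leaves conductor $7$, so $L=\QQ(\zeta_7)^+$.

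\textbf{Step 4: conclusion.} Once $L=\QQ(\zeta_7)^+$, the Frobenius at $p\neq 7$ in $\Gal(\QQ(\zeta_7)/\QQ)\cong(\ZZ/7)^\times$ is $p \bmod 7$. Its image in $\Gal(L/\QQ)=(\ZZ/7)^\times/\{\pm1\}$ is trivial exactly when $p\equiv\pm1\pmod 7$. By Step 2, this is exactly when $g$ splits, and completely, over $\FF_p$, for $p\neq3,7$.

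The excluded primes are checked directly. For $p=3$ we have $g\equiv x^3-1=(x-1)^3$. Since $3\equiv 3\pmod 7$, the statement must be read with the ambient convention $p\ge5$ used in this section. For $p=7$ we have $g\equiv x^3+x^2+5x+6 \pmod 7$, and I would check by hand that this has no root in $\FF_7$, consistent with $7\not\equiv\pm1\pmod 7$.
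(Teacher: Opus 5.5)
Your treatment of $p=7$ is wrong. Reducing mod $7$ gives $x^3+x^2+5x+6$. Since $(x-2)^3=x^3-6x^2+12x-8$, this is $\equiv (x-2)^3 \pmod 7$, so $x=2$ is a triple root in $\FF_7$. (Also, $2$ is a primitive cube root of unity mod $7$; this is exactly the coincidence behind the paper's remark singling out $p=7$.) The hand check you propose would therefore fail. Read literally, the lemma is false at $p=7$ for the same reason it is false at $p=3$, where $g\equiv (x-1)^3$.

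What your argument actually proves is the equivalence for $p\nmid 21$. That is also all the paper's Frobenius argument proves. Frobenius is undefined at the ramified prime $7$. At $p=3$, the reduction of $g$ says nothing about how $3$ splits in $L$ (it is inert there), because $3$ divides the index $[\mathcal{O}_L:\ZZ[r]]=27$ for a root $r$ of $g$. You should state the result under the hypothesis $p\nmid 21$ rather than claim consistency at $p=7$.

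Apart from this, your route is correct and differs from the paper's. The paper writes the three roots explicitly as elements of $\QQ(\zeta_7)^+$. Equivalently, $r=-3y^2-6y-2$ with $y=\zeta_7+\zeta_7^{-1}$, which is the ``quadratic expression in $y$'' your search in Step 3 would find. The splitting field is then $\QQ(\zeta_7)^+$ by a degree count, and everything else is read off from Frobenius.

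Your discriminant computation is correct: $\Disc(g)=3^6\cdot 7^2$. It gives you the cyclic Galois group and the ``splits completely'' clause without knowing the field. The price is that identifying $L$ becomes a separate step.

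Your fallback for that step needs one correction: a translation $x\mapsto x+k$ never changes the discriminant. Use instead $x=3z+1$, which gives $g(3z+1)=27(z^3+6z^2+5z+1)$. The new cubic has discriminant $49$, so $3$ is unramified in $L$, the conductor is $7$, and $L=\QQ(\zeta_7)^+$.
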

\begin{proof}
    Over $\QQ(\zeta_7)$, the polynomial splits completely, and has roots
    \begin{align*}
        3(\zeta_7^{-2} + \zeta_7^2) + 6(\zeta_7^{-3} +\zeta_7^3) -2, \\
        3(\zeta_7^{-2} + \zeta_7^2) - 3(\zeta_7^{-3} +\zeta_7^3) - 5,\\
        -6(\zeta_7^{-2} + \zeta_7^2) -3(\zeta_7^{-3} +\zeta_7^3) - 8.
    \end{align*}
    We can write these roots in terms of $\zeta_7 + \zeta_7^{-1}$ with the following identities:
    \begin{align*}
        \zeta_7^{-2} + \zeta_7^{2} & = (\zeta_7+ \zeta_7^{-1})^2 - 2,&\\
        \zeta_7^{-3} + \zeta_7^{3} & = (\zeta_7+\zeta_7^{-1})^3 - 3(\zeta_7+\zeta_7^{-1}).&
    \end{align*}
    Therefore, the splitting field of $x^3 +15x^2 + 12x - 1$ is a subfield of the totally real field $\QQ(\zeta_7)^+ = \QQ(\zeta_7 + \zeta_7^{-1})$. The degree of $\QQ(\zeta_7)^+$ over $\QQ$ is $3$. The polynomial $x^3 +15x^2 + 12x - 1$ is irreducible over $\QQ$, so the degree of its splitting field over $\QQ$ is also $3$. This forces the splitting field of the polynomial over $\QQ$ to be isomorphic to $\QQ(\zeta_7)^+$. 

    The Galois group of $\QQ(\zeta_7)^+$ over $\QQ$ is isomorphic to $(\ZZ/7\ZZ)^*/\{\pm1\}$. The polynomial splits completely $\pmod p$ if and only if the Frobenius element is trivial in $(\ZZ/7\ZZ)^*/\{\pm1\}$. The Frobenius class is determined by $p \pmod 7$ up to sign, and the identity coset is exactly $\{\pm1\}$. That is, 
    $x^3 +15x^2 + 12x - 1$ splits over $\FF_p$ if and only if $p\equiv \pm 1 \pmod 7$.
\end{proof}

\begin{theorem}\label{theorem:per3}
  Assume $p\neq 2,3$. Let $S_p$ be the image size of $\psi\circ\f$ in $\PP^1(\FF_p)$. Let $D$ be the number of residue classes in the image of $\psi\circ \varphi$ whose fiber consists of $3$ points each of ramification index $2$. Then,
  \[
    \mu_{\ZZ_p}(\mathcal{A}_{p,2,3}) = 
    \begin{cases}
      \dfrac{S_p - D}{p+1} + \dfrac{D(p+1)}{2p^3}, &\text{if $p\equiv 1 \pmod 3$}, \\[3.5\jot]
      \dfrac{S_p - D - 2}{p+1} + \dfrac{D(p+1)}{2p^3} + \dfrac{2(p+2)}{3p^4},
      &\text{if $p\equiv 2 \pmod 3$}. \\
    \end{cases}
    \]
\end{theorem}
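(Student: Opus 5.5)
The plan is to compute $\mu_{\ZZ_p}(\mathcal{A}_{p,2,3})$ as the Haar measure of the image of $F \colonequals \psi\circ\f\colon \PP^1(\QQ_p)\to\PP^1(\QQ_p)$ intersected with $\ZZ_p$. To compare with the formula, it is convenient to work with the normalized measure on $\PP^1(\QQ_p)$, in which each residue disc has mass $1/(p+1)$; this accounts for the denominators $p+1$. By the two parametrizations, $c\in\QQ_p$ lies in $\mathcal{A}_{p,2,3}$ exactly when $c=F(s)$ for some $s\in\PP^1(\QQ_p)$, up to finitely many exceptional parameters, which have measure zero. I will therefore decompose $\PP^1(\QQ_p)$ into the $p+1$ residue discs $\overline{c}+p\ZZ_p$ and, for each $\overline{c}\in\PP^1(\FF_p)$, compute the proportion of the disc that lies in $F(\PP^1(\QQ_p))$.

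First, I handle the discs containing no critical value. If $\overline{c}$ is not in the reduced image, then no $s$ reduces into $F^{-1}(\overline{c})$, and the disc contributes $0$. If $\overline{c}$ is in the image and some $\overline{s}\in F^{-1}(\overline{c})$ has $\overline{F}'(\overline{s})\ne 0$, then Hensel's lemma, in the form of Lemma~\ref{lem:igusa}, shows that $F$ maps $\overline{s}+p\ZZ_p$ bijectively onto the whole disc. So every image class with at least one unramified preimage contributes the full $1/(p+1)$. Using the formula for $(\psi\circ\f)'$, the only possible exceptions are the classes whose preimages are all critical. These are the images of the roots of $x^2+x+1$, of the cubic $x^3+15x^2+12x-1$, and of $\infty$ together with the poles of $\f$.

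I next identify these exceptional classes. I will check that the three roots of the cubic (over $\overline{\FF}_p$) all map to a single class $c_0$. This class is then a fiber of three points, each of ramification index $2$, exactly when the cubic splits, which by Lemma~\ref{lem: ram points} happens when $p\equiv\pm1\pmod 7$. This is what $D$ counts. For such a class I compute the local image measure. Near each preimage $F(s)=c_0+u\,(s-s_i)^2+\dots$ with $u\in\ZZ_p^\times$, so the image is $c_0+u_i\cdot(\text{squares})$. Integrating as in the proof of Theorem~\ref{theorem:fixedand2} gives the measure of $\{c: c-c_0\in u_i\QQ_p^{\times2}\}$. Combining the three square-classes $u_i$ should give the correction $D(p+1)/2p^3$ in place of $D/(p+1)$. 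This requires knowing how the $u_i$ distribute among the square classes, and I expect them to cover both classes in a way that yields that formula.

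For $p\equiv 1\pmod 3$, the cube roots of unity are totally ramified points of $\f$. However, $\psi$ is unramified there, and their images under $F$ have other unramified preimages through the collision pairs, or else behave like index-$3$ points. A cube $u t^3$ covers a whole disc when $p\equiv 2\pmod 3$ but not when $p\equiv1\pmod 3$. I will verify that in the $p\equiv1$ case these classes still lift fully. For $p\equiv 2\pmod 3$, the cube roots of unity are not rational, but the two classes coming from $\infty$ and a pole structure of index $3$ do not lift fully. Each such class contributes $(p+2)/3p^4$ instead of $1/(p+1)$, obtained by summing over the $p$-adic valuation layers where the cube map is surjective only on the relevant cosets. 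This accounts for the $-2$ and the term $2(p+2)/3p^4$.

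The main obstacle is this last ramification analysis: pinning down exactly which classes are totally ramified for each residue of $p$ mod $3$, and evaluating the $p$-adic integrals over the nested valuation layers precisely enough to produce the stated rational functions. I would first carry it out explicitly by expanding $F$ in a local coordinate at each critical point.
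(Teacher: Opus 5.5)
Your architecture matches the paper's: residue discs, Hensel for every class with an unramified $\FF_p$-rational preimage, and local corrections only for classes whose rational fiber is entirely critical. The index-$3$ analysis, however, is wrong, and it goes wrong because you tried to fit the case labels as printed.

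Reading off $(\psi\circ\f)'$, the full ramification of $\psi\circ\f$ is:
\begin{itemize}
\item $\omega_1,\omega_2$, of index $3$ (the critical points of $\f$);
\item the roots $r_i$ of the cubic, of index $2$, forming the fiber over $-\tfrac74$;
\item the poles $1,-2,-\tfrac12$, of index $2$, forming the fiber over $\infty$. They are $\f^{-1}(-2)$, and $\psi$ has a double pole at $-2$.
\end{itemize}
The point $\infty$ and the poles of $\f$ are unramified. So there is no ``pole structure of index $3$'' when $p\equiv 2\pmod 3$. For such $p$ we have $\omega_i\notin\QQ_p$, and nothing of index $3$ occurs at all. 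Note that $(p+2)/3$ is not even an integer for these $p$; it equals $\frac{p-1}{3}+1$, the number of cubes in $\FF_p$, only when $p\equiv 1\pmod 3$.

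Conversely, for $p\equiv 1\pmod 3$ the class of $y_i=\psi\circ\f(\omega_i)$ cannot ``still lift fully'' from $D_{\omega_i}$. The image of $D_{\omega_i}$ lies in $y_i+p^3\ZZ_p$ and meets only $(p+2)/3$ of its $p$ subclasses mod $p^4$. The class is unobstructed only if some $\f$-preimage of $\iota(\f(\omega_i))$ lies in $\PP^1(\FF_p)$, and the paper explicitly assumes this does not happen. This is exactly where the paper's proof produces the $-2$ and the $\frac{2(p+2)}{3p^4}$, and it does so for $p\equiv 1\pmod 3$. The two case labels in the displayed statement are interchanged relative to the proof; you should follow the ramification data rather than invent a mechanism to match them. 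Relatedly, $D$ counts both $(2,2,2)$ fibers, not only the cubic's: the one over $-\tfrac74$ when $p\equiv\pm1\pmod 7$, and the one over $\infty$ always.

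The second missing idea is the square-class coincidence, and your expectation runs the wrong way. We have $\f(r_i)=0$ with $\f'(r_i)\neq0$, while $\psi'(0)=0$ and $\psi''(0)=-\tfrac18$. Hence the leading coefficients are $c_i=\tfrac12\psi''(0)\f'(r_i)^2=-\f'(r_i)^2/16$. They all lie in one square class, so the three local images coincide and the union is counted once. If the $c_i$ ``covered both classes'', the correction would roughly double. The same argument, using the double pole of $\psi$ at $-2$, handles the fiber over $\infty$.

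Be aware also that carrying out your exact integration will not reproduce the printed constants. The set $\psi\circ\f(D_{r_i})=-\tfrac74+c_i\{w^2:w\in p\ZZ_p\}$ has $\ZZ_p$-measure $\frac{1}{2p(p+1)}$. The paper's $\frac{p+1}{2p^3}$ comes from counting all $(p+1)/2$ subclasses mod $p^3$ as full balls. That includes $u\equiv0$, whose image lies in $-\tfrac74+p^4\ZZ_p$ and so does not fill its ball. Likewise, the paper's $\frac{p+2}{3p^4}$ compares with the exact value $\frac{1}{3p(p^2+p+1)}$.

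Finally, switching to a measure that gives each disc of $\PP^1(\QQ_p)$ mass $1/(p+1)$ changes the quantity being computed. The measure $\mu_{\ZZ_p}$ gives each residue class mass $1/p$ and ignores the disc at $\infty$.
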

\begin{proof}
    For now, assume that $x^2 + x + 1$ and $x^3 +15x^2 + 12x - 1$ split completely in $\FF_p$; the polynomial $x^2 + x+ 1$ splits if and only if $p\equiv 1\pmod 3$, and $x^3 +15x^2 + 12x - 1$ splits if and only if $p\equiv \pm1 \pmod 7$, as shown in Lemma \ref{lem: ram points}. 
    
    We start with the roots of $x^3 +15x^2 + 12x - 1$. Suppose its roots are $r_1, r_2, r_3 \in \FF_p$. We first show that 
    \[\psi\circ\varphi(r_1) = \psi\circ\varphi(r_2) = \psi\circ\varphi(r_3).\]
    Consider the polynomial $P(x)Q(r_1) - P(r_1)Q(x)$. The roots of this polynomial are the points mapping to $\psi\circ\varphi(r_1)$. From direct computation,
    \[P(x) = -\frac{7}{4}Q(x) \pmod{x^3 +15x^2 + 12x - 1}.\]
    Therefore, $P(x)Q(r_1) - P(r_1)Q(x)$ is $0$ mod $x^3 +15x^2 + 12x - 1$, so it has $r_1$, $r_2$, and $r_3$ as roots. That is, $r_1, r_2,$ and $r_3$ all map to a common image point $y$ under $\psi\circ\varphi$.  

    Let $D_{r_i}$ be the disk $r_i + p\ZZ_p$. From the derivative computation, each $r_i$ is a simple critical point of $\psi\circ\varphi$, so its ramification index is $2$. We can use this to write an expansion of $\psi\circ \varphi$ around $r_i$ for each $i$: 
    \[\psi\circ\varphi(x) = -\frac{7}{4} + c_i(x-r_i)^2 + O((x-r_i)^3),\]
    where $c_i = (\psi\circ\varphi)''(r_i)/2$. That is, the image of $D_{r_i}$ is contained in $y + p^2\ZZ_p$ for all $i$. Therefore, the union of the disks $D_{r_i}$ contributes at most $O(1/p^2)$ to the total image measure.

    We can say more about the coefficients $c_i$. In particular, since $\varphi(r_i) = 0$, we can write
    \[\varphi(x) = a_i(x- r_i) + O((x-r_i)^2),\]
    where $a_i = \varphi'(r_i)$. Furthermore, since $\psi'(0) = 0$, and $\psi''(0)= -1/8$, the expansion of $\psi(x)$ around $0$ is
    \[\psi(x) = -\frac{7}{4} - \frac{1}{16}x^2 + O(x^3).\]
    Using this, the expansion of $\psi\circ \varphi$ around $r_i$ can be written
    \[\psi\circ\varphi(x) = -\frac{7}{4} - \frac{1}{16}(\varphi(x))^2 + O(\varphi(x)^3) = -\frac{7}{4} - \frac{a_i^2}{16}(x-r_i)^2 + O((x-r_i)^3).\]
    Therefore,
    \[\frac{c_i}{c_j} = \frac{a_i^2}{a_j^2} \in (\FF_p^*)^2.\]
   In other words, the $c_i$ are all contained in the same square class mod $p$. Writing $x = r_i + pu$, we obtain
   \[\psi\circ\varphi(x) = -\frac{7}{4} + p^2(c_iu^2 + ph_i(u))\]
   for some analytic $h_i:\ZZ_p\to \ZZ_p$, so modulo $p^3$ we have
   \[\psi\circ\varphi(x) = -\frac{7}{4} + p^2c_iu^2 \pmod{p^3}.\]
   Since all $c_i$ live in the same square class in $\FF_p^*$, then the sets $\{c_iu^2: u\in \FF_p\}$ coincide. As $u$ varies, $u^2$ takes exactly $(p+1)/2$ values, hence $(p+1)/2$ residue classes in $-7/4 + p^2\ZZ_p$. From the expansion of $\psi\circ\varphi(x)$, the distinct values of $c_iu^2 \pmod p$ correspond to distinct classes mod $p^3$, each contributing a full ball of measure $p^{-3}$. Therefore, 
   \[\mu(\psi\circ\varphi(D_{r_1})) = \frac{p+1}{2p^3}.\]
   Since the images $\psi\circ \varphi(D_{r_i})$ coincide modulo $p^3$, then
   \[\mu(\psi\circ\varphi(D_{r_1})) = \mu\left(\bigcup_{i = 1}^3 \psi\circ \varphi(D_{r_i})\right).\]

    The case for the conjugacy class of $\infty$ is a similar calculation, since there are three preimages, each with ramification index $2$.
    
    Let $\omega_1, \omega_2$ be the roots of $x^2 + x+ 1$. Assume that neither $\omega_i$ is in a collision pair with an unramified point, as otherwise there would be no obstruction to lifting the residue class of the image. Unlike the cubic critical points above, $\omega_1$ and $\omega_2$ have ramification index $3$, and their images under $\psi\circ\varphi$ are not necessarily identified. Consequently, each disk must be considered separately, and the local expansion has the form
    \[\psi\circ\varphi(x) = y + c_i(x - \omega_i)^3 + O((x-\omega_i)^4).\]
    This leads to a dependence on the image of $u\mapsto u^3$ over $\FF_p$. The number of cubic residues (including $0$) mod $p$ is
    \[\frac{p-1}{\gcd(3,p-1)} + 1.\]
    Therefore, the image consists of $\#\text{Im}(u\mapsto u^3)$ residue classes mod $p^4$, each with measure $p^{-4}$, so
    \[\mu(\psi\circ\varphi(D_{\omega_i})) = \frac{1}{p^4}\left(\frac{p-1}{\gcd(3,p-1)} + 1\right).\]
    Suppose that $S_p$ is the size of the image of $\psi\circ\varphi$ mod $p$.  Then, the number of points whose image lifts without obstruction is $S_p - D$, if $\omega_1,\omega_2$ are not defined over $\FF_p$ (i.e. when $p \equiv 2\pmod 3$), and $S_p - D - 2$ if they are defined. When $p \equiv 2 \pmod 3$, the image density is therefore 
    \[\frac{S_p - D}{p+1} + \frac{D(p+1)}{2p^3}.\]
    When $p \equiv 1 \pmod 3$, the critical points $\omega_1,\omega_2$ are defined, and so we include altered density from the disks $D_{\omega_i}$. That is, the density is \[\frac{S_p - D -2}{p+1} + \frac{D(p+1)}{2p^3} + \frac{2(p+2)}{3p^4}.\qedhere\]   
\end{proof}
\begin{remark}\label{rmk:geometry}
    When the characteristic of the underlying field is not $7$, the equation \eqref{eq:collisioncurve} describes a singular affine plane model of a genus $4$ curve, which possesses interesting geometry in its own right\footnote{When the characteristic of the field is $7$, this curve has genus $1$.}. In particular, over a cubic extension of $\QQ$, its Jacobian variety is isogenous to a product of elliptic curves. Morton \cite{Morton92} uses this Jacobian splitting to show that the modular curve $X(3,3)$ has no $\QQ$-rational points. In other words, there are no quadratic polynomials conjugate to $x^2 + c$ with $c\in \QQ$ that have two independent rational $3$-cycles.  

    Since $N$ from Proposition \ref{prop:Fpdensity} is the number of rational points on a genus $4$ curve over $\FF_p$, we can use the Hasse-Weil bound in conjunction with the statement of Proposition \ref{prop:Fpdensity} to approximate the density of $c$ values with a $\QQ_p$-rational $3$-cycle. Applying the bound, we have 
\[|N-(p+1)| \leq 8 \sqrt{p}.\]
Furthermore, we have the bounds $1\leq K\leq 2$ and $0\leq B\leq 2$. Using these inequalities, the error term can be approximated by 
\[\frac{p+1}{18} -\frac{8\sqrt{p}}{18} - \frac{7}{3} \leq \frac{N - 9K - 12B}{18} \leq \frac{p+1}{18} +\frac{8\sqrt{p}}{18} - \frac{1}{2}.\]
Then, the expected image size of $\psi\circ \varphi$ is
\[I - \frac{N - 9K - 12B}{18} = \frac{5p}{18} + O(\sqrt{p}).\]
As $p$ gets large, the density of points in the image is $5/18$, the same value predicted in Example \ref{ex:period3density}.
\end{remark}

\bibliographystyle{plain}

\end{document}